\newtheorem{define}{Definition}[section]
\newtheorem{lemma}{Lemma}[section]
\newtheorem{proposition}{Proposition}[section]
\newtheorem{theor}{Theorem}[section]
\newtheorem{remark}{Remark}[section]
\newtheorem{theorem}{Main Theorem}
\newcommand{\bg}{\bar{g}}
\numberwithin{equation}{section}
\newsavebox{\@brx}
\newcommand{\llangle}[1][]{\savebox{\@brx}{\(\m@th{#1\langle}\)}%
  \mathopen{\copy\@brx\kern-0.5\wd\@brx\usebox{\@brx}}}
\newcommand{\rrangle}[1][]{\savebox{\@brx}{\(\m@th{#1\rangle}\)}%
  \mathclose{\copy\@brx\kern-0.5\wd\@brx\usebox{\@brx}}}
\begin{document}

\title{Comparison of total $\sigma_k$-curvature}

\author{Jiaqi Chen}
\address{School of Electrical Engineering and Automation, Xiamen University of Technology, Xiamen 361024, Fujian, P.R. China.}
\email{chenjiaqi@xmut.edu.cn}

\author{Yufei Shan}
\address{School of Mathematical Sciences, Shanghai Jiao Tong University,  Shanghai 200240, P.R. China.}
\email{universeplane1991@sjtu.edu.cn}

\author{Yinghui Ye}
\address{Department of Mathematics, Sun Yat-sen University, Guangzhou, Guangdong 510275, , P.R. China.}
\email{yeyh8@mail2.sysu.edu.cn}

\thanks{Yinghui Ye is partially supported by National Key R\&D Program of China (2024YFA1015000) and National Natural Science Foundation of China (No.12571065); Jiaqi Chen is partially supported by the Scientific Research Foundation of Xiamen University of Technology Under Grant (No.YKJ23009R), the Scientific Research Foundation for Young and Middle aged Teachers in Fujian Province Under Grant (No.JAT231105); Yufei Shan is partially supported by National Natural Science Foundation of China (No.12271348)}

\keywords{$\sigma_k$-curvature; Einstein metrics; Total curvature comparison}

\begin{abstract}
    Volume comparison theorem is a type of fundamental results in Riemannian geometry. In this article, we extend the volume comparison to the comparison of total $\sigma_l$-curvature with respect to $\sigma_k$-curvature. In particular, we prove the comparison holds for metrics close to strictly stable positive Einstein metric when $k$ and $l$ satisfy certain assumptions. As for negative Einstein metrics, we prove a similar comparison result provided certain assumptions on sectional curvature holds for the manifold.
\end{abstract}

\maketitle

\section{Introduction}\label{Section: 1}
As a type of important problem in Riemannian geometry, the comparison problem is usually described as followed: whether certain geometric functional is bounded by the reference manifold provided certain curvature conditions hold? 

The geometric functional that has been studied the most extensively is undoubtedly volume. For example, the famous Bishop-Gromov volume comparison theorem said that the volume of a complete manifold is bounded above by that of the round sphere if its Ricci curvature is bounded below by a corresponding positive constant. People tried to improve this result by replacing the assumption of Ricci curvature by a weaker one. However, it's shown by Bray\cite{Brayphd}, scalar curvature is not sufficient to control the volume for the case of positive curvature. As for the negative curvature, Schoen\cite{Schoen} proposed a conjecture that the volume of a closed manifold is bounded below by the closed hyperbolic manifold if its scalar curvature is bounded below by the corresponding scalar curvature. The evidences supporting this conjecture can be found in \cite{Besson-C-G1,Besson-C-G2,Hamilton,Perelman1,Perelman2}. For non-V-static domains, Corvino, Eichmair, and Miao\cite{Miao} showed that volume comparison does not hold. Inspired by this, Yuan\cite{Yuan_VolumeCW} studied the V-static case with appropriate boundary conditions. In the same paper, Yuan also studied the volume comparison for closed strictly stable Einstein manifold:
\begin{theor}[Yuan, \cite{Yuan_VolumeCW}]\label{theorem: Yuan}
    Suppose $(M^n,\bg)$ is a closed strictly stable Einstein manifold with\begin{equation*}
        \mathrm{Ric}_{\bg}=(n-1)\lambda\bg,
    \end{equation*}
    where $\lambda\neq0$ is a constant. There exists a constant $\epsilon_0>0$ such that for any metric $g$ on $M$ which satisfies\begin{equation*}
        R_g\geq n(n-1)\lambda\ \ \ \mathrm{and}\ \ \ \|g-\bg\|_{C^2(M,\bg)}<\epsilon_0,
    \end{equation*}
    the following volume comparison holds:
    \begin{itemize}
        \item if $\lambda>0$, then $\mathrm{Vol}_{M,g} \leq \mathrm{Vol}_{M,\bg}$;
        \item if $\lambda<0$, then $\mathrm{Vol}_{M,g} \geq \mathrm{Vol}_{M,\bg}$.
    \end{itemize}
    Moreover, the inequality holds in either case if and only if $g$ is isometric to $\bg$.
\end{theor}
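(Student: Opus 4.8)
The plan is to obtain the comparison from a constrained second-variation analysis of the volume functional. After normalizing $g$ so that $R_g\equiv n(n-1)\lambda$ and fixing the diffeomorphism gauge, I would Taylor-expand $\mathrm{Vol}_g(M)$ to second order in $h=g-\bar g$ along the scalar-curvature constraint and show that the resulting quadratic form is definite, with a sign dictated by that of $\lambda$; the definiteness is exactly what the strict-stability hypothesis supplies. The role of the $C^2$-smallness is to make the quadratic (second-variation) term dominate the cubic remainder, so that the infinitesimal sign persists as a genuine inequality.

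First I would perform the reductions. Since $R_g$ and $\mathrm{Vol}_g(M)$ are diffeomorphism invariant, the Ebin slice theorem lets me replace $g$ by an isometric copy with $h$ in the divergence-free slice, $\delta_{\bar g}h=0$. Because $\bar g$ is a nondegenerate Einstein metric with $\lambda\neq 0$, the linearized scalar-curvature operator $DR_{\bar g}$ is surjective onto the constants, so by the implicit function theorem I may prescribe constant scalar curvature on metrics near $\bar g$ and thereby reduce to $R_g\equiv n(n-1)\lambda$, arranging the normalization to move the volume in the favorable direction. Writing $f=\mathrm{tr}_{\bar g}h$, in the divergence-free gauge the linearized constraint reads $DR_{\bar g}(h)=-\Delta_{\bar g}f-(n-1)\lambda f$, so to leading order $R_g\equiv n(n-1)\lambda$ forces $-\Delta_{\bar g}f=(n-1)\lambda f$. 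For $\lambda<0$ this is impossible since $(n-1)\lambda<0$ lies below the spectrum of $-\Delta_{\bar g}$, while for $\lambda>0$ the Lichnerowicz--Obata estimate $\mu_1(-\Delta_{\bar g})\geq n\lambda>(n-1)\lambda$ excludes a nonzero eigenfunction; in either case $f\equiv0$ to leading order, so the effective perturbations are transverse-traceless (TT).

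Next comes the second variation. Along a path $g_t$ with $g_0=\bar g$, $\dot g_0=h$ TT and $R_{g_t}\equiv n(n-1)\lambda$, differentiating the volume twice gives
\[
V''(0)=\tfrac12\int_M\big(\mathrm{tr}_{\bar g}\ddot g_0-|h|^2\big)\,dv_{\bar g},
\]
and differentiating the constraint twice, $0=DR_{\bar g}(\ddot g_0)+D^2R_{\bar g}(h,h)$, integrates to
\[
\int_M\mathrm{tr}_{\bar g}\ddot g_0\,dv_{\bar g}=\tfrac{1}{(n-1)\lambda}\int_M D^2R_{\bar g}(h,h)\,dv_{\bar g}.
\]
Substituting the known expression for $D^2R_{\bar g}(h,h)$ on TT tensors, the pointwise $|h|^2$ contributions cancel and $V''(0)$ collapses to
\[
V''(0)=-\frac{1}{4(n-1)\lambda}\int_M\big(|\nabla h|^2-2\langle\mathring{R}h,h\rangle\big)\,dv_{\bar g},
\]
a multiple of $\lambda^{-1}$ times the stability quadratic form $Q(h)=\int_M\langle(\nabla^{*}\nabla-2\mathring{R})h,h\rangle\,dv_{\bar g}$. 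Strict stability says precisely that $Q(h)>0$ for every nonzero TT tensor $h$, whence $V''(0)<0$ when $\lambda>0$ and $V''(0)>0$ when $\lambda<0$, giving the two inequalities. The strict positivity of $Q$ (a spectral gap) also yields the rigidity statement: equality in the comparison forces $Q(h)=0$, hence $h\equiv0$, so $g$ is isometric to $\bar g$.

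The main obstacle is not this infinitesimal computation but its globalization to a fixed $C^2$-neighborhood. Upgrading the sign of $V''(0)$ to an honest inequality for finite $h$ requires uniform control of the cubic remainders in the Taylor expansions of both $R_g$ and $dv_g$, together with an implicit-function-theorem construction of the constant-scalar-curvature normalization that is uniform in $g$ and respects the favorable direction of the volume change. The spectral gap furnished by strict stability is exactly what makes the quadratic term coercive enough to absorb these remainders once $\epsilon_0$ is small; establishing this coercivity estimate, and controlling the normalization step, are the delicate points of the argument.
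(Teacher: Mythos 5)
Your infinitesimal computation is correct, and it agrees exactly with the paper's machinery: for a TT tensor $h$ at an Einstein metric one has $\int_M D^2R_{\bar g}(h,h)\,dv_{\bar g}=\int_M\big\langle h,\big(\tfrac12\Delta_E+(n-1)\lambda\big)h\big\rangle\,dv_{\bar g}$, so your constrained second variation $V''(0)=-\frac{1}{4(n-1)\lambda}\int_M\langle h,-\Delta_E h\rangle\,dv_{\bar g}$ is precisely the $k=1$, $l=0$ specialization of Proposition 3.2, and likewise your Lichnerowicz--Obata elimination of the trace part matches PART 2 of Section 4. The first genuine gap is the normalization step. You assert that the implicit function theorem lets you replace $g$ by a nearby metric of constant scalar curvature $n(n-1)\lambda$, ``arranging the normalization to move the volume in the favorable direction.'' The IFT produces a CSC metric but gives no control whatsoever on the sign of the volume change, and the natural way to get such control---conformal deformation plus the maximum principle---works only for $\lambda<0$: writing $\tilde g=u^{4/(n-2)}g$ and evaluating $-\frac{4(n-1)}{n-2}\Delta u+R_gu=n(n-1)\lambda\, u^{\frac{n+2}{n-2}}$ at the maximum of $u$ gives $u\le1$ when $\lambda<0$, hence $\mathrm{Vol}(\tilde g)\le\mathrm{Vol}(g)$ as desired; but for $\lambda>0$ the same argument only yields $\max u\ge1$, and evaluating at the minimum gives an upper bound on $u_{\min}$, i.e.\ an inequality in the useless direction. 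So in the case $\lambda>0$ your reduction to the equality constraint $R_g\equiv n(n-1)\lambda$ is unjustified (an integrated version can be salvaged only up to cubic errors, whose absorption requires exactly the coercivity you have not established). This is the difficulty the paper's functional is designed to circumvent: in $\mathcal F_{\bar g}(g)=\left(\int_M\sigma_l(g)\,dv_g\right)^{2k}\left(\int_M\sigma_k(g)\,dv_{\bar g}\right)^{n-2l}$ the curvature integral is taken against the \emph{background} volume form $dv_{\bar g}$, so the pointwise hypothesis $R_g\ge n(n-1)\lambda$ enters by direct integration, with no normalization and no loss.

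The second gap is that the actual content of the theorem---passing from the sign of $V''(0)$ to an inequality valid on a fixed $C^2$-neighborhood, together with the rigidity statement---is exactly what you defer as ``the delicate points of the argument.'' In infinite dimensions, definiteness of the second variation at a critical point does not by itself yield a local extremum: one needs the constraint set to be a manifold near $\bar g$ (delicate on the round sphere, where $\ker DR_{\bar g}^*=E_{n\lambda}\neq0$, and the sphere is strictly stable so it is not excluded), a coercivity estimate for the Hessian in a norm compatible with the $C^2$-hypothesis, and uniform control of the remainder. The paper packages all of this via the Fischer--Marsden Morse lemma (Lemma 4.3) applied to the scale-invariant functional restricted to an Ebin--Palais slice: one verifies $F=0$, $dF=0$ on the critical manifold $\mathcal Q=\{c^2\bar g\}$, that $d^2F$ is definite on the $L^2$-normal complement $\mathcal E_{\bar g}$ (this is where strict stability and Obata enter), and that the linearized gradient is an isomorphism of $\mathcal E_{\bar g}$; the conclusion $\mathcal F(g)\ge\mathcal F(\bar g)\Rightarrow g=c^2\bar g$, followed by a scaling argument forcing $c=1$, gives both the inequality and the equality case simultaneously. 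Your rigidity claim (``equality forces $Q(h)=0$, hence $h\equiv0$'') has the same defect: equality is a statement about metrics, not about linearized directions, and without the Morse-lemma neighborhood statement (or an equivalent quantitative coercivity argument) the proof is incomplete precisely where the theorem lives.
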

As a follow-up study, Lin and Yuan studied the volume comparison with respect to $Q$-curvature for closed strictly stable positive Einstein manifold in \cite{Yuan_Q2}. And Andrade, Cruz and Santos established a similar results with respect to $\sigma_2$-curvature in \cite{sigma2}. Building on these results, Chen, Fang, He and Zhong focused on the $\sigma_k$-curvature, which can be regarded as the generalization of the above curvatures, and obtained the following volume comparison theorem \cite{2025Volume}:

\begin{theor}[Chen-Fang-He-Zhong\cite{2025Volume}]\label{theorem: CFHZ}
    Suppose $(M^n,\bg)$ is a closed strictly stable Einstein manifold with\begin{equation*}
        \mathrm{Ric}_{\bg} = (n-1) \lambda \bg,
    \end{equation*}
    where $\lambda>0$ is a constant. Then there exists a constant $\epsilon_0>0$ such that for any metric $g$ on $M$ satisfying\begin{equation*}
        \sigma_k^g\geq \sigma_k^{\bg}\ \ \ \mathrm{and}\ \ \ \|g-\bg\|_{C^2(M,\bg)}<\epsilon_0,
    \end{equation*}
    the following volume comparison holds:
    \begin{equation*}
        \mathrm{Vol}_{M,g} \leq \mathrm{Vol}_{M,\bg},
    \end{equation*}
    with inequality holds if and only if $g$ is isometric to $\bg$.
\end{theor}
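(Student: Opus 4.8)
The plan is to treat $\bar{g}$ as a constrained critical point of the volume functional and to read off the comparison from the sign of the second variation, with strict stability supplying the crucial definiteness. Since both $g\mapsto\mathrm{Vol}(g)$ and $g\mapsto\sigma_k(g)$ are invariant under pullback by diffeomorphisms, I would first invoke Ebin's slice theorem (Bianchi gauge) to assume, after pulling $g$ back by a diffeomorphism close to the identity, that $h:=g-\bar{g}$ is divergence-free; this removes the Lie-derivative directions $\mathcal{L}_X\bar{g}$ and costs only a controlled error in the $C^2$-norm. At the Einstein background the Schouten tensor is $A_{\bar{g}}=\frac{\lambda}{2}\bar{g}$, so all eigenvalues of $\bar{g}^{-1}A_{\bar{g}}$ equal $\frac{\lambda}{2}>0$; in particular $\bar{g}$ lies in the positive cone $\Gamma_k^+$, which guarantees that $\sigma_k$ is elliptic near $\bar{g}$, and $\sigma_k(\bar{g})=\binom{n}{k}(\lambda/2)^k$.

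The first-order analysis is where the specific structure of $\sigma_k$ at an Einstein metric enters. Writing $W=g^{-1}A_g$ and using that the Newton tensor at $W=\frac{\lambda}{2}I$ is $T_{k-1}=\binom{n-1}{k-1}(\lambda/2)^{k-1}I$, one finds after cancelling the trace contributions that
\[
D\sigma_k|_{\bar{g}}(h)=\frac{\binom{n-1}{k-1}(\lambda/2)^{k-1}}{2(n-1)}\,DR_{\bar{g}}(h)=:C\,DR_{\bar{g}}(h),\qquad C>0.
\]
Thus, to leading order, the pointwise constraint $\sigma_k(g)\ge\sigma_k(\bar{g})$ is exactly the scalar-curvature constraint $R_g\ge R_{\bar{g}}$ studied by Yuan, and the first variation of volume $D\mathrm{Vol}_{\bar{g}}(h)=\frac{1}{2}\int_M\mathrm{tr}_{\bar{g}}h\,\dvg$ is proportional to $\int_M DR_{\bar{g}}(h)\,\dvg$. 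Decomposing the (now divergence-free) $h$ into its transverse-traceless part and conformal part, $h=h^{TT}+\frac{\psi}{n}\bar{g}$, one checks $DR_{\bar{g}}(h^{TT})=-\langle\mathrm{Ric}_{\bar{g}},h^{TT}\rangle=0$, so that the linearized constraint forces $-(n-1)\Delta\psi-R\psi=0$; by the Lichnerowicz--Obata estimate $\lambda_1(-\Delta)\ge n\lambda$, with equality only on the round sphere, strict stability makes this operator invertible and the linearized constraint admits only $\psi=0$. Hence $\bar{g}$ is a critical point and the conformal mode $\psi$ is slaved to $h^{TT}$ at second order.

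The heart of the argument is the second variation. Expanding $dv_g=\Big(1+\tfrac12\mathrm{tr}h+\tfrac18(\mathrm{tr}h)^2-\tfrac14|h|^2+\cdots\Big)\dvg$ and using $\mathrm{tr}h=\psi$, $|h|^2=|h^{TT}|^2+\tfrac1n\psi^2$, together with $\psi=O(\|h^{TT}\|^2)$, the leading contribution is
\[
\mathrm{Vol}(g)-\mathrm{Vol}(\bar{g})=\frac12\int_M\psi\,\dvg-\frac14\int_M|h^{TT}|^2\,\dvg+o(\|h^{TT}\|^2).
\]
Integrating the second-order form of the constraint $\sigma_k(g)=\sigma_k(\bar{g})$ expresses $\int_M\psi$ as a fixed multiple of $\int_M D^2\sigma_k|_{\bar{g}}(h^{TT})$, which for transverse-traceless $h^{TT}$ is governed by the linearized Einstein/Lichnerowicz operator. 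The statement to prove then becomes the coercivity-type inequality
\[
\frac{n}{4CR}\int_M D^2\sigma_k|_{\bar{g}}(h^{TT})\,\dvg\le\frac14\int_M|h^{TT}|^2\,\dvg,
\]
which is precisely where the strict stability hypothesis is used: it provides the spectral gap forcing the right-hand side to dominate. One concludes $\mathrm{Vol}(g)\le\mathrm{Vol}(\bar{g})$ for $g$ in a small $C^2$-neighbourhood, with equality only when $h^{TT}=0$, whence $\psi=0$ and $h$ is pure gauge, i.e.\ $g$ is isometric to $\bar{g}$.

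The main obstacle is this last quadratic estimate. Two difficulties must be handled together: first, computing $D^2\sigma_k|_{\bar{g}}$ on transverse-traceless tensors, which unlike the scalar-curvature case genuinely involves the nonlinear structure of the elementary symmetric polynomial of the Schouten tensor (the second-order terms of $\sigma_k$, not merely of $R$), and relating it to the stability operator; second, converting the pointwise inequality constraint $\sigma_k(g)\ge\sigma_k(\bar{g})$ into the equality used above---either through a nonnegative Lagrange multiplier, or, using ellipticity in $\Gamma_k^+$, by solving the fully nonlinear equation $\sigma_k(e^{2u}g)=\sigma_k(\bar{g})$ for a conformal factor that moves the volume monotonically in the right direction. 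Both steps rely on $\lambda>0$ (positivity of $T_{k-1}$ and membership in $\Gamma_k^+$, as well as the sign of the constant $C$), which is exactly why the negative-Einstein case requires the additional sectional-curvature assumptions referred to in the abstract.
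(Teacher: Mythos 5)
Your sketch shares some raw ingredients with the paper's argument (a TT-plus-conformal splitting, strict stability controlling the traceless block, Lichnerowicz--Obata controlling the trace block), but it contains two internal errors and leaves open precisely the step that constitutes the actual proof. The internal errors first. (a) Gauge: you fix Bianchi gauge ($\delta_{\bar g}h=0$) and then write $h=h^{TT}+\frac{\psi}{n}\bar g$ with $h^{TT}$ transverse-traceless and $\psi$ a function; but $\delta\bigl(h^{TT}+\frac{\psi}{n}\bar g\bigr)=-\frac{1}{n}d\psi$, so a divergence-free $h$ admits such a decomposition only with $\psi$ \emph{constant}, which trivializes your analysis of the conformal mode. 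The correct setting is the Ebin--Palais slice whose tangent space is $S^{TT}_{2,\bar g}\oplus(C^{\infty}(M)\cdot\bar g)$ --- this is what the paper uses, and it is not the divergence-free gauge. (b) The ``slaving'' $\psi=O(\|h^{TT}\|^2)$ is asserted, not proved, and the mechanism you invoke --- invertibility of $-(n-1)\Delta-R=-(n-1)(\Delta+n\lambda)$ --- fails exactly on the round sphere, where $n\lambda$ is the first nonzero eigenvalue of $-\Delta$ and the kernel $E_{n\lambda}$ is nontrivial. The round sphere is a strictly stable positive Einstein manifold, so it is squarely inside the theorem and cannot be excluded; the paper's slice theorem and its Proposition 4.1 treat the spherical case separately via these extra kernel directions. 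Moreover, invertibility of $\Delta+n\lambda$ on functions is a Lichnerowicz--Obata fact; it has nothing to do with \emph{strict stability}, which concerns $\Delta_E$ on $S^{TT}_{2,\bar g}$ only --- your sentence conflates the two.

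The step you yourself call ``the main obstacle'' (computing $D^2\sigma_k$ on TT tensors and converting the open inequality constraint $\sigma_k(g)\ge\sigma_k(\bar g)$ into something a second-variation argument can use) is not a technicality to be deferred: it is the whole proof, and neither of your proposed routes (a nonnegative Lagrange multiplier, or solving $\sigma_k(e^{2u}g)=\sigma_k(\bar g)$ by ellipticity in $\Gamma_k^+$) is carried out or obviously workable, since even granting ellipticity you would still need a monotonicity statement for the volume along the conformal deformation. The paper closes this gap by a different device altogether: it introduces the scale-invariant functional $\mathcal{F}_{\bar g}(g)=\bigl(\mathrm{Vol}_M(g)\bigr)^{2k}\bigl(\int_M\sigma_k(g)\,dv_{\bar g}\bigr)^{n}$ (the $l=0$ case of its $\mathcal{F}_{\bar g}$, note the second factor is integrated against the \emph{background} volume form), shows $\bar g$ is a critical point whose second variation on the slice is negative semi-definite --- the TT block contributing $\int_M\mathring h\cdot\Delta_E\bigl((k-1)\Delta_E-\mu\lambda\bigr)\mathring h\,dv_{\bar g}\ge 0$ by strict stability and $\lambda>0$, the trace block handled by Lichnerowicz--Obata --- and then applies the Fischer--Marsden Morse lemma to upgrade this to local rigidity: $\mathcal{F}_{\bar g}(g)\ge\mathcal{F}_{\bar g}(\bar g)$ forces $g=c^2\bar g$. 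The inequality constraint is then dispatched by contradiction plus scaling: if $\mathrm{Vol}(g)\ge\mathrm{Vol}(\bar g)$ and $\sigma_k(g)\ge\sigma_k(\bar g)$, then $\mathcal{F}_{\bar g}(g)\ge\mathcal{F}_{\bar g}(\bar g)$, hence $g=c^2\bar g$, and $\mathrm{Vol}(c^2\bar g)=c^n\mathrm{Vol}(\bar g)$ together with $\sigma_k(c^2\bar g)=c^{-2k}\sigma_k(\bar g)$ force $c\ge 1$ and $c\le 1$ simultaneously, so $c=1$. Without this functional-plus-Morse-lemma mechanism (or a fully executed implicit-function-theorem argument with remainder control replacing it), your proposal remains a formal second-variation heuristic and does not yield the stated open-neighborhood comparison or its equality case.
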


Inspired by treating $\sigma_k$-curvature as a generalization of scalar curvature, we observed that the total $\sigma_k$-curvature is also a generalization of volume, which in particular is the total $\sigma_0$-curvature. Based on this observation, we studied the comparison of total $\sigma_l$-curvature with respect to the $\sigma_k$-curvature. For positive Einstein metric, we proved the following theorem:
\begin{theorem}\label{theorem: main_A}
For $n \geq 3$, suppose $(M^n,\bg)$ is an $n$-dimensional closed strictly stable Einstein manifold with
    \begin{equation*}
        \mathrm{Ric}_{\bg} = (n-1) \lambda \bg,
    \end{equation*} 
    where $\lambda > 0$ is a constant. Given positive integers $k, l \leq n$, there exists a constant $\epsilon>0$, such that for any metric $g$ on $M$ satisfying 
    \begin{equation*}
        \|g-\bg\|_{C^2(M,\bg)}<\epsilon,
    \end{equation*}
    if one of the following assumptions holds:
    \begin{equation*}
        \mathrm{(a)}\ \ \sigma_k^g \geq \sigma_k^{\bg}, \ \ l< \frac{n}{2}\qquad\mathrm{or}\qquad\mathrm{(b)}\ \  \sigma_k^g \leq \sigma_k^{\bg}, \ \ l > \frac{n}{2} + k
    \end{equation*}
    then
    \begin{equation*}
        \int_M\sigma_l^gdv_g\leq\int_M\sigma_l^{\bg}dv_{\bg}.
    \end{equation*}
    Moreover, in both cases, equality holds if and only if $g$ is isometric to $\bg$.
\end{theorem}

As for negative Einstein metric, we proposed certain assumption on sectional curvature:

\begin{theorem}\label{theorem: main_B}
    For $n \geq 3$, suppose $(M^n,\bg)$ is an $n$-dimensional closed strictly stable Einstein manifold with
    \begin{equation*}
        \mathrm{Ric}_{\bg} = (n-1)\lambda\bg,
    \end{equation*} 
    where $\lambda < 0$ is a constant. Given positive integers $k, l \leq n$, assume the sectional curvature $K_{\bg}$ of $\bg$ satisfies
    \begin{equation*}
        K_{\bg}<\frac{n(n-2)}{2\left(n(k-1)-2l(k-l)\right)}\lambda.
    \end{equation*}
    When $l\in\left(\frac{n}{2},  \frac{n}{2}+k \right)$, there exists a constant $\epsilon>0$, such that for any metric $g$ on $M$ satisfying 
    \begin{equation*}
        \|g-\bg\|_{C^2(M,\bg)}<\epsilon,
    \end{equation*}
    if one of the following assumptions holds:
    \begin{equation*}
        \mathrm{(a)}\ \ \sigma_k^g \geq \sigma_k^{\bg}, \ \ k \ \mathrm{is\  odd}\qquad\mathrm{or}\qquad\mathrm{(b)}\ \  \sigma_k^g \leq \sigma_k^{\bg}, \ \ k \ \mathrm{is\  even}
    \end{equation*}
    then
    \begin{align*}
        \mathrm{(i)}\ \ \ \int_M\sigma_l^gdv_g\leq\int_M\sigma_l^{\bg}dv_{\bg}\quad &\mathrm{provided}\quad l\ \mathrm{is\ even;}\\
        \mathrm{(ii)}\ \ \int_M \sigma_l^gdv_g \geq \int_M\sigma_l^{\bg}dv_{\bg}\quad &\mathrm{provided}\quad l\ \mathrm{is\ odd.}
    \end{align*}
    Moreover, in both cases, equality holds if and only if $g$ is isometric to $\bg$.
\end{theorem}

For negative curvature, comparison results hold even for some special $l\notin\left(\frac{n}{2},  \frac{n}{2}+k \right)$, with \Cref{theorem: Yuan} being a known case. Furthermore, we derived the comparison theorem for $k=1$ and $l=1$:

\begin{theorem}\label{theorem: main_C}
    For $n \geq 3$, suppose $(M^n,\bg)$ is an $n$-dimensional closed strictly stable Einstein manifold with\begin{equation*}
    \mathrm{Ric}_{\bg}=(n-1)\lambda\bg,
\end{equation*} 
where $\lambda < 0$ is a constant. There exists a constant $\epsilon>0$, such that for any metric $g$ on $M$ satisfying 
\begin{equation*}
    \|g-\bg\|_{C^2(M,\bg)}<\epsilon, \quad R_g\geq R_{\bg},
\end{equation*}
then
\begin{equation*}
    \int_MR_gdv_g\leq\int_MR_{\bg}dv_{\bg}.
\end{equation*}
Moreover, equality holds if and only if $g$ is isometric to $\bg$.
\end{theorem}

\begin{remark}
    This is not a trivial corollary from \Cref{theorem: Yuan}, which shows that $\mathrm{Vol}_{M,g}\geq\mathrm{Vol}_{M,\bg}$ provided $R_g\geq R_{\bg}$ for negative Einstein metric $\bg$. When $R_g$ is a constant, the above result not only implies that the volume increases, but also gives an estimate of the rate of this increase, that is $\frac{\mathrm{Vol}_{M,g}}{\mathrm{Vol}_{M,\bg}}\geq\frac{R_{\bg}}{R_{g}}$.
\end{remark}

\begin{remark}
    As shown by the above main theorems, our results extend the existing volume comparison theorems. In particular, \Cref{theorem: Yuan} is the case with $k=1,\ l=0$ and \Cref{theorem: CFHZ} is the case of positive curvature with $l=0$.
\end{remark}

\begin{remark}
    Similar to the result in \cite{Yuan_VolumeCW,Yuan_Q2,2025Volume}, the strictly stable condition is necessary. Two counterexamples are constructed in the \Cref{Section: 4}.
\end{remark}

This paper is organized as follow: In \Cref{Section: 2}, we list relevant definitions that will be used in this paper and recall some important geometric variational formulas. In \Cref{Section: 3}, we propose the key functional for the proof and calculate its first and second variations. In \Cref{Section: 4}, we prove our main result and construct a counterexample for unstable Einstein manifold.

\section*{\bfseries Acknowledgment}
The authors would like to thank Professor Yuan Wei for helpful discussions and comments. The authors also would like to express their appreciations to the  anonymous referee for his/her thorough and careful review of the manuscript and providing many detailed and valuable comments and suggestions.

\section{Preliminary}\label{Section: 2}
This section collects frequently used notation and conventions for the reader's convenience. Throughout this article, $(M^n, g)$ denotes an n-dimensional closed Riemannian manifold with $n \geq 3$, unless otherwise stated. All calculations are carried out with respect to the reference metric $\bg$, unless specified otherwise.

\subsection{Basic concepts}

We begin by recalling the concepts of Einstein manifolds.
\begin{define}[Einstein manifold]
     A Riemannian manifold $(M^n, g)$ is said to be Einstein with Einstein constant $\lambda \in \mathbb{R}$ if its Ricci curvature satisfies
    \begin{equation*}
        \mathrm{Ric}_g = (n-1)\lambda g.
    \end{equation*}
\end{define}

We denote the space of transverse-traceless symmetric 2-tensors by $S_{2,g}^{TT}(M)$ and we define the following Einstein operator
\begin{equation*}
    \Delta_E = \Delta_g + 2\mathrm{Rm}_g
\end{equation*}
on $S_{2,g}^{TT}(M)$, where $\Delta_g=\nabla_i\nabla^i$ and $\mathrm{Rm}_g$ denotes the curvature tensor. The positivity of this operator plays a crucial role in the study of Einstein manifolds. To further analyze the geometric properties of Einstein manifolds, we introduce the notion of stability for Einstein metrics. This concept is closely tied to the spectral properties of the Einstein operator $\Delta_E$ on transverse-traceless tensors.
\begin{define}[Stability of Einstein metric]
    For a closed Einstein manifold $(M^n,g)$ where dimension n is at least 3, metric $g$ is said to be stable if the Einstein operator $\Delta_E$ is non-positive on $S_{2,g}^{TT}(M)\backslash\{0\}$, that is
    \begin{equation*}
        \min \mathrm{spec}_{TT}(-\Delta_E)=\inf_{h\in S_{2,g}^{TT}(M)\backslash\{0\}}\frac{\int_M \langle h,-\Delta_E h \rangle_g dv_g}{\int_M |h|_g^2 dv_g} \geq 0.
    \end{equation*}
    Moreover, $g$ is called strictly stable if the inequality is strict.
\end{define}

For a closed Riemannian manifold $(M^n,g)$, the Schouten curvature is  
\begin{equation*}
    S_g=\mathrm{Ric}_g - \frac{R_g}{2(n-1)}g,
\end{equation*}
where $\mathrm{Ric}_g$ and $R_g$ are Ricci curvature and scalar curvature of $g$ respectively. The $\sigma_k$-curvature is defined as the $k$-th elementary symmetric polynomial of the eigenvalues of $S_g$. Namely,

\begin{define}[$\sigma_k$-curvature]
For a Riemannian manifold $(M^n,g)$, let $S_g$ denote the Schouten tensor and $\{\lambda_i\}_{i=1}^n$ its eigenvalues. The $\sigma_k$-curvature of $g$ is defined as the $k$-th elementary symmetric polynomial of these eigenvalues:
\begin{equation*}
    \sigma_k := \sum_{1\leq i_1 < i_2 < \cdots < i_k \leq n} \lambda_{i_1}\lambda_{i_2}\cdots\lambda_{i_k},
\end{equation*}
where $1 \leq k \leq n$. 
\end{define}
In particular, $\sigma_1$ is the scalar curvature (up to normalization) and $\sigma_n$ gives the determinant of the Schouten tensor. Following Reilly \cite{Robert1}, an equivalent but more practical definition of $\sigma_k$-curvature is
\begin{equation}\label{eq: sigmak}
    \sigma_k=\frac{1}{k!}\delta_{i_1\cdots i_k}^{j_1\cdots j_k}S_{j_1}^{i_1}\cdots S_{j_k}^{i_k},
\end{equation}
where $\delta_{i_1\cdots i_k}^{j_1\cdots j_k}$ is the generalized Kronecker delta defined as 
\begin{equation*}
    \delta_{i_1\cdots i_k}^{j_1\cdots j_k}=
    \begin{cases}
        1, \text{ if } j_1\cdots j_k \text{ are distinct integers with even permutation of } i_1\cdots i_k;\\
        -1, \text{ if } j_1\cdots j_k \text{ are distinct integers with odd permutation of } i_1\cdots i_k;\\
        0, \text{ in other cases.}
    \end{cases}
\end{equation*}

\subsection{Geometric variational formulas}

In the study of Riemannian geometry, the behavior of geometric quantities under variations of the metric tensor plays a pivotal role in understanding the structure and stability of manifolds. These variations are fundamental in diverse areas such as general relativity, geometric analysis, and the calculus of variations, where they inform the analysis of critical points, stability conditions, and geometric flows.

In this section, we will list some variational formulas that will be used later. We emphasize that any unspecified calculations are performed in the reference metric $\bg$ and assume that the linearization of the metric is $h\in\mathcal{S}_{2,\bg}(M)$, which is the spaces of all symmetric 2-tensors on $M$.

\subsubsection{Basic variational formulas}

In this part, we provide basic variational formulas for the inverse metric, volume, Ricci curvature and scalar curvature. Detailed proof can be found in \cite{Yuan_ph_D, Yuan_VolumeCW}.  First of all, we recall the variational formulas of the inverse metric:
\begin{lemma}\label{Lemma: variation_of_metrics}
    In local coordinates, the first and second variations of the inverse metric are
    \begin{equation*}
        \dot{g}^{jk}=-h^{jk}
    \end{equation*}
    and
    \begin{equation*}
        \ddot{g}^{jk}=2h_i^jh^{ik}.
    \end{equation*}
\end{lemma}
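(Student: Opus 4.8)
The plan is to obtain both variations directly from the algebraic identity that characterizes the inverse metric, namely $g^{jk}g_{kl}=\delta^j_l$, evaluated along the affine family $g(t)=\bar{g}+th$ whose derivative at $t=0$ is the prescribed linearization $h$. Since this family is linear in $t$, one has $\frac{d}{dt}g_{kl}=h_{kl}$ and $\frac{d^2}{dt^2}g_{kl}=0$; the vanishing of the second derivative of the covariant metric components is exactly what makes the second variation of the inverse so short. Throughout I raise and lower indices with $\bar{g}$, so that $h_i^j=\bar{g}^{ja}h_{ia}$ and $h^{jk}=\bar{g}^{ja}\bar{g}^{kb}h_{ab}$.

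For the first variation I would differentiate $g^{jk}(t)g_{kl}(t)=\delta^j_l$ once in $t$ and evaluate at $t=0$, obtaining $(D\bar{g}^{jk}\cdot h)\,\bar{g}_{kl}+\bar{g}^{jk}h_{kl}=0$. Contracting with $\bar{g}^{lm}$ and using $\bar{g}_{kl}\bar{g}^{lm}=\delta^m_k$ together with $\bar{g}^{jk}h_{kl}\bar{g}^{lm}=h^{jm}$ isolates $D\bar{g}^{jm}\cdot h=-h^{jm}$, which is the claimed formula after relabeling the free index.

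For the second variation I would differentiate the same identity twice. Because $\frac{d^2}{dt^2}g_{kl}=0$, the Leibniz expansion $\ddot{g}^{jk}g_{kl}+2\dot{g}^{jk}\dot{g}_{kl}+g^{jk}\ddot{g}_{kl}=0$ collapses at $t=0$ to $\ddot{g}^{jk}\bar{g}_{kl}+2\dot{g}^{jk}h_{kl}=0$. Substituting the first-variation result $\dot{g}^{jk}=-h^{jk}$ gives $\ddot{g}^{jk}\bar{g}_{kl}=2h^{jk}h_{kl}$, and contracting with $\bar{g}^{lm}$ yields $D^2\bar{g}^{jm}\cdot(h,h)=2h^{jk}h_k^m$, which upon relabeling equals $2h_i^jh^{ik}$ in view of the symmetry $h^{jk}=h^{kj}$.

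The only point requiring care is consistent index bookkeeping, namely that every raise and lower is performed with $\bar{g}$ and that the contracted summation indices are relabeled correctly so as to display the second variation in the stated symmetric form $2h_i^jh^{ik}$. There is no genuine analytic obstacle: once the affine family $g(t)=\bar{g}+th$ and the defining identity $g^{jk}g_{kl}=\delta^j_l$ are fixed, the entire computation is purely algebraic and relies only on the vanishing of the second $t$-derivative of the covariant components.
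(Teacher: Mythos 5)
Your proof is correct and is exactly the standard argument behind this lemma: the paper itself gives no proof, deferring to \cite{Yuan_ph_D, Yuan_VolumeCW}, where the formulas are obtained precisely by differentiating the identity $g^{jk}(t)g_{kl}(t)=\delta^j_l$ along the affine path $g(t)=\bar{g}+th$ and using $\ddot{g}_{kl}=0$. Your index bookkeeping checks out, including the final identification $2h^{jk}h_k^m=2h_i^jh^{ik}$ via the symmetry of $h$, so there is nothing to add.
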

For volume:
\begin{lemma}\label{Lemma: variation_of_volume}
    The first and second variations of volume are
    \begin{equation*}
        D \mathrm{Vol}_{M,\bg} \cdot h=\frac{1}{2}\int_M \left( tr_{\bg} h \right) dv_{\bg}
    \end{equation*}
    and
    \begin{equation*}
        D^2 \mathrm{Vol}_{M,\bg} \cdot (h,h) = \frac{1}{4} \int_M \left( \left( tr_{\bg}h \right)^2-2|h|^2 \right) dv_{\bg}.
    \end{equation*}
\end{lemma}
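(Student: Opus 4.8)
The plan is to compute both variations pointwise on the volume density $dv_g=\sqrt{\det g}\,dx$ and then integrate over $M$, using the inverse-metric variation already recorded above. Throughout I work along the linear path $g(t)=\bar{g}+th$, so that $\dot g_{ij}=h_{ij}$ is independent of $t$ while the inverse metric $g^{ij}(t)$ varies nonlinearly.

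First I would apply Jacobi's formula for the derivative of a determinant, $\tfrac{d}{dt}\det g=\det g\cdot g^{ij}\dot g_{ij}=\det g\cdot g^{ij}h_{ij}$. Differentiating $\sqrt{\det g}$ and setting $t=0$ gives
\[
    D\bigl(\sqrt{\det g}\bigr)\cdot h=\tfrac{1}{2}\sqrt{\det\bar{g}}\,\bar{g}^{ij}h_{ij}=\tfrac{1}{2}\bigl(tr_{\bar{g}}h\bigr)\sqrt{\det\bar{g}},
\]
that is $D(dv_g)\cdot h=\tfrac12(tr_{\bar g}h)\,dv_{\bar g}$; integrating over $M$ yields the stated first variation $D\text{Vol}_M(\bar{g})\cdot h=\tfrac12\int_M(tr_{\bar g}h)\,dv_{\bar g}$.

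For the second variation I would differentiate the first-order density $\tfrac12\sqrt{\det g}\,g^{ij}h_{ij}$ once more in $t$. The product rule produces two terms. Differentiating the factor $\sqrt{\det g}$ reproduces the square of the first-order density and contributes $\tfrac14(tr_{\bar g}h)^2\sqrt{\det\bar g}$ at $t=0$. Differentiating the inverse metric uses the first variation $D\bar{g}^{ij}\cdot h=-h^{ij}$ from the lemma above, giving $\tfrac{d}{dt}\bigl(g^{ij}h_{ij}\bigr)\big|_{0}=-h^{ij}h_{ij}=-|h|^2$ and hence a contribution $-\tfrac12|h|^2\sqrt{\det\bar g}$. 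Collecting,
\[
    D^2\bigl(\sqrt{\det g}\bigr)\cdot(h,h)=\tfrac14\bigl((tr_{\bar g}h)^2-2|h|^2\bigr)\sqrt{\det\bar g},
\]
and integrating gives the asserted formula. Equivalently, writing $\sqrt{\det g}=\exp\bigl(\tfrac12\log\det g\bigr)$ and using $\tfrac{d}{dt}\log\det g=g^{ij}h_{ij}$ streamlines the bookkeeping, since then $\tfrac{d^2}{dt^2}\log\det g\big|_0=-|h|^2$ follows directly from the inverse-metric variation.

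Since everything reduces to Jacobi's formula together with the variation $D\bar{g}^{ij}\cdot h=-h^{ij}$, there is no genuine analytic obstacle here. The only step requiring care is the coefficient and sign bookkeeping in the second variation: one must keep the genuinely second-order contribution $-|h|^2$ coming from $g^{ij}$ separate from the first-order-squared contribution $\tfrac14(tr_{\bar g}h)^2$, so that the two combine correctly into $\tfrac14\bigl((tr_{\bar g}h)^2-2|h|^2\bigr)$.
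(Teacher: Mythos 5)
Your proof is correct and is exactly the standard determinant-expansion argument: the paper itself omits the proof of this lemma, deferring to \cite{Yuan_ph_D, Yuan_VolumeCW}, where the computation proceeds just as you do, via Jacobi's formula $\tfrac{d}{dt}\det g = \det g\, g^{ij}\dot g_{ij}$ together with $D\bar g^{ij}\cdot h = -h^{ij}$. Your sign and coefficient bookkeeping in the second variation, including the cross-check via $\sqrt{\det g}=\exp\bigl(\tfrac12\log\det g\bigr)$, is accurate.
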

For Ricci curvature:
\begin{lemma}\label{Lemma: variation_of_Ricci}
    The first variations of Ricci curvature is
    \begin{equation*}
        D \mathrm{Ric}_{\bg}\cdot h=-\frac{1}{2}\big(\Delta_Lh+\mathscr{L}_X\bg\big),
    \end{equation*}
    where $\Delta_L$ is the Lichnerowicz Laplacian defined by
    \begin{equation*}
        \Delta_L h_{jk}=\Delta_E h_{jk} - R_{ji} h_k^i-R_{ki}h_j^i
    \end{equation*}
    in local coordinates and $X=\big(\frac{1}{2}d(trh)+\delta h\big)^{\#}$ for $(\delta h)_j=-\nabla^ih_{ij}$. The second variations of Ricci curvature, in local coordinates, is
    \begin{equation*}
    \begin{split}
        D^2 \mathrm{Ric}^{\bg}_{jk}\cdot(h,h)=&h^{pi}(R_{ijkl}h_p^l+R_{ijpl}h_k^l-\nabla_i\nabla_kh_{jp}+\nabla_i\nabla_ph_{jk}+\nabla_j\nabla_kh_{ip}-\nabla_j\nabla_ph_{ik})\\
        &+\frac{1}{2}(\nabla_jh_k^p+\nabla_kh_j^p-\nabla^ph_{jk})\big(2(\delta h)_p+\nabla_p(trh)\big)\\
        &+\frac{1}{2}(\nabla_ih_k^p+\nabla_kh_i^p-\nabla^ph_{ik})(\nabla_jh_p^i-\nabla_ph_j^i+\nabla^ih_{jp}).
    \end{split}
    \end{equation*}
\end{lemma}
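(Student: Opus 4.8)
The plan is to derive both formulas by the standard route of varying the Levi-Civita connection and substituting into the Palatini identity. Write $g_t=\bar g+th$ for the one-parameter family of metrics in the direction $h\in\mathcal{S}_{2,\bar g}(M)$, so that $DRic_{\bar g}\cdot h=\frac{d}{dt}\big|_{t=0}Ric_{g_t}$ and $D^2Ric_{\bar g}\cdot(h,h)=\frac{d^2}{dt^2}\big|_{t=0}Ric_{g_t}$. The central object is the variation of the Christoffel symbols,
\begin{equation*}
    \gamma^i_{jk}:=D\Gamma^i_{jk}\cdot h=\frac12 \bar g^{il}\big(\nabla_jh_{kl}+\nabla_kh_{jl}-\nabla_lh_{jk}\big),
\end{equation*}
which is a genuine tensor because it is a difference of two connections. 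Since $Ric_{jk}=\partial_i\Gamma^i_{jk}-\partial_j\Gamma^i_{ik}+\Gamma^i_{il}\Gamma^l_{jk}-\Gamma^i_{jl}\Gamma^l_{ik}$ in local coordinates, differentiating and recombining the quadratic products with the partial derivatives gives the Palatini identity, valid along the whole family,
\begin{equation*}
    \frac{d}{dt}Ric_{g_t,jk}=\nabla^{(t)}_i\gamma^i_{jk}(t)-\nabla^{(t)}_j\gamma^i_{ik}(t),
\end{equation*}
where $\nabla^{(t)}$ and $\gamma(t)$ are formed from $g_t$.

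For the first variation I would set $t=0$ and expand $\nabla_i\gamma^i_{jk}-\nabla_j\gamma^i_{ik}$. The contraction of the last term of $\gamma$ produces $-\frac12\nabla^l\nabla_lh_{jk}=-\frac12\Delta h_{jk}$ directly, while the remaining second derivatives organize into $\nabla$ of the divergence $\delta h$ and of $d(tr h)$. Commuting the two covariant derivatives in those terms via the Ricci identity introduces curvature contractions; collecting them produces exactly the $2\mathrm{Rm}$ piece of $\Delta_E$ together with the $R_{ji}h^i_k+R_{ki}h^i_j$ terms that distinguish $\Delta_L$ from $\Delta_E$, and the leftover pure-derivative terms assemble into $\frac12\mathscr{L}_X\bar g$ with $X=\big(\frac12 d(tr h)+\delta h\big)^{\#}$. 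This yields the stated formula $DRic_{\bar g}\cdot h=-\frac12(\Delta_Lh+\mathscr{L}_X\bar g)$.

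For the second variation I would differentiate the $t$-dependent Palatini identity once more and evaluate at $t=0$, which splits into $\big(\tfrac{d}{dt}\nabla^{(t)}\big)\gamma+\nabla\big(\tfrac{d}{dt}\gamma\big)$ for each of the two terms. First, $\frac{d}{dt}\gamma^i_{jk}(t)\big|_{t=0}$ equals the second variation $D^2\Gamma^i_{jk}\cdot(h,h)$; using $D^2\bar g^{jk}\cdot(h,h)=2h^j_ih^{ik}$ from the earlier lemma together with the first-order change of the connection acting on $h$, this is a contraction of $h$ against $\nabla h$, so applying $\nabla_i$ produces both $h*\nabla\nabla h$ and $\nabla h*\nabla h$ contributions. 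Second, the operators $\nabla^{(t)}$ are themselves $t$-dependent, and $\frac{d}{dt}\nabla^{(t)}$ contracts factors of $\gamma$ against $\gamma$, giving the further $\nabla h*\nabla h$ terms displayed on the last two lines. Third, in the $h*\nabla\nabla h$ terms I would commute the covariant derivatives into the symmetric normal form shown on the first line, each commutation generating a Riemann contraction, which assembles the terms $h^{pi}\big(R_{ijkl}h^l_p+R_{ijpl}h^l_k\big)$. Symmetrizing in $(j,k)$ and relabelling indices then matches the claimed expression.

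The main obstacle is the bookkeeping in the second variation: one must remember that $\nabla^{(t)}$ is being differentiated (so $\frac{d}{dt}\nabla^{(t)}$ is itself a contraction with $\gamma$), and one must repeatedly apply the Ricci identity to put every pair of second derivatives into a common order, each application producing a curvature term that has to be tracked. I would therefore differentiate each factor of the Palatini identity separately and group the output into three classes — terms with two derivatives on a single $h$, terms quadratic in $\nabla h$, and curvature terms quadratic in $h$ — then verify that each class reproduces the corresponding terms of the stated formula after symmetrization in $(j,k)$.
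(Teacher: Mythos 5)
Your outline is correct and takes essentially the same route as the proofs the paper relies on (the paper itself gives no proof, deferring to Yuan's thesis and the scalar-curvature volume comparison paper, where the computation is exactly this one): vary the Christoffel symbols to get the tensor $\gamma^i_{jk}=\frac12\bar g^{il}(\nabla_jh_{kl}+\nabla_kh_{jl}-\nabla_lh_{jk})$, apply the Palatini identity $\frac{d}{dt}Ric_{jk}=\nabla^{(t)}_i\gamma^i_{jk}(t)-\nabla^{(t)}_j\gamma^i_{ik}(t)$, commute derivatives to assemble $\Delta_L$ and $\mathscr{L}_X\bar g$ for the first variation, and differentiate the identity once more, tracking both $\frac{d}{dt}\nabla^{(t)}$ and $\dot\gamma$, for the second. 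No gap to report; the plan correctly identifies all the bookkeeping (the $\gamma*\gamma$ terms, the $h*\nabla h$ structure of $\dot\gamma$, and the Ricci-identity curvature contributions) that the cited computations carry out.
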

For scalar curvature:
\begin{lemma}\label{Lemma: variation_of_scalar}
    The first and second variations of scalar curvature are
    \begin{equation*}
        DR_{\bg}\cdot h=\gamma_{\bg} h:=-\Delta(trh)+\delta^2h-\mathrm{Ric}\cdot h
    \end{equation*}
    and
    \begin{equation*}
       \begin{split}
        D^2R_{\bg}\cdot(h,h)=&-2\gamma_{\bg}(h\times h)-\Delta(|h|^2)-\frac{1}{2}|\nabla h|^2-\frac{1}{2}|d(trh)|^2\\
        &+2\langle h,\nabla^2(trh)\rangle-2\langle\delta h,d(trh)\rangle+\nabla_{i}h_{jk}\nabla^jh^{ik},
       \end{split}
    \end{equation*}
    where $(h\times h)_{ij}=h_i^kh_{kj}$
\end{lemma}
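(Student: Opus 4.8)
The plan is to exhibit $\bar g$ as a critical point of an auxiliary Lagrange-type functional $\funch$ combining the total $\sigma_l$-curvature with the $\sigma_k$-constraint, and then to extract the comparison purely from the sign of the second variation $D^2\funch(\bar g)$, which strict stability together with the dimension/parity/sectional-curvature hypotheses forces to be definite. Throughout I write $F_l(g):=\int_M\sigma_l(g)\,dv_g$, $P(g):=\int_M\sigma_k(g)\,dv_{\bar g}$ (note the \emph{fixed} background volume form in $P$), $h:=g-\bar g$, and I record the Einstein simplifications: the Schouten tensor is $S_{\bar g}=\mu\,\bar g$ with $\mu=\tfrac{(n-2)\lambda}{2}$, so all Newton tensors of $S_{\bar g}$ are multiples of the identity and $\sigma_m(\bar g)=\binom{n}{m}\mu^{m}$ is constant. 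The first move is to turn the pointwise hypothesis into an integral one: integrating $\sigma_k(g)\ge\sigma_k(\bar g)$ against $dv_{\bar g}$ gives $P(g)\ge P(\bar g)$, with no circularity since $dv_{\bar g}>0$ is fixed.

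Next I would show that \emph{both} first variations collapse onto the single quantity $\int_M\mathrm{tr}_{\bar g}h\,dv_{\bar g}$. Using Lemma~2.4 for $DR_{\bar g}\cdot h=\gamma_{\bar g}h$ together with $\mathrm{Ric}_{\bar g}=(n-1)\lambda\bar g$, the divergence and Laplacian terms integrate away and $\int_M\gamma_{\bar g}h\,dv_{\bar g}=-(n-1)\lambda\int_M\mathrm{tr}_{\bar g}h\,dv_{\bar g}$; since the relevant Newton tensors are pure trace, this propagates to $\int_M D\sigma_m\cdot h\,dv_{\bar g}=(\text{const})\int_M\mathrm{tr}_{\bar g}h\,dv_{\bar g}$ for $m=l,k$. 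Combining with Lemma~2.2 for the volume form yields $DF_l(\bar g)\cdot h=c_l\int_M\mathrm{tr}_{\bar g}h\,dv_{\bar g}$ and $DP(\bar g)\cdot h=d_k\int_M\mathrm{tr}_{\bar g}h\,dv_{\bar g}$, where a homogeneity computation (under $g\mapsto(1+\epsilon)\bar g$, using $F_l(cg)=c^{\frac n2-l}F_l(g)$ and $P(cg)=c^{-k}P(g)$) fixes the constants as $c_l=\tfrac{(n-2l)}{2n}\binom{n}{l}\mu^{l}$ and $d_k=-\tfrac{k}{n}\binom{n}{k}\mu^{k}$. I then set $\Lambda:=c_l/d_k$ and $\funch(g):=F_l(g)-\Lambda\,P(g)$, so that $D\funch(\bar g)=0$ exactly, and record
\[
\operatorname{sign}(\Lambda)=-\operatorname{sign}(n-2l)\,\operatorname{sign}\!\big(\mu^{\,l+k}\big),
\]
which already reproduces the case dichotomy of the theorem: for $\lambda>0$ and $l<\tfrac n2$ one gets $\Lambda<0$; for $\lambda<0$, $l>\tfrac n2$, $k$ odd and $n$ even one gets $\Lambda<0$ when $l$ is even and $\Lambda>0$ when $l$ is odd; the case $k=1$ reduces directly to a scalar-curvature (Yuan-type) argument since $\sigma_1$ is linear in $S$.

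The analytic core is then $D^2\funch(\bar g)(h,h)$. Using the Ebin--Bourguignon slice and the diffeomorphism invariance of $F_l$, I reduce to divergence-free $h$ and decompose $h=h^{TT}+\tfrac1n(\mathrm{tr}_{\bar g}h)\bar g$. Feeding the second-variation formulas of Lemmas~2.3 and 2.4 (and the $D^2$ of the volume form and of the Newton-tensor contractions) into $D^2F_l$ and $D^2P$, the $h^{TT}$-block becomes a quadratic form governed by the Einstein operator $\Delta_E=\Delta+2\mathrm{Rm}$ — signed by strict stability, $\min\mathrm{spec}_{TT}(-\Delta_E)>0$ — while the conformal block reduces to $\int_M(a|\nabla f|^2+b f^2)\,dv_{\bar g}$ with $f=\mathrm{tr}_{\bar g}h$, signed by the Lichnerowicz--Obata lower bound on the first nonzero Laplace eigenvalue together with the explicit sign of the coefficients $a,b$ (again governed by $n-2l$ and powers of $\mu$). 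Finally I combine everything through the exact identity
\[
F_l(g)-F_l(\bar g)=\tfrac12\,D^2\funch(\bar g)(h,h)+\Lambda\big(P(g)-P(\bar g)\big)+o\!\left(\|h\|_{C^2(M,\bar g)}^2\right):
\]
since $P(g)-P(\bar g)\ge0$, the $\Lambda$-term carries $\operatorname{sign}(\Lambda)$, the quadratic term carries the definiteness sign of $D^2\funch$, and the hypotheses are precisely those making these two signs coincide with the asserted comparison direction. Strict definiteness forces $h^{TT}=0$ and $f=0$ in the equality case, so $h$ is pure gauge and $g$ is isometric to $\bar g$, giving the rigidity statement.

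I expect the main obstacle to be the explicit assembly and signing of $D^2\funch(\bar g)$, specifically the full curvature term $\mathrm{Rm}_{\bar g}\!\ast h\!\ast h$ arising from $D^2\mathrm{Ric}$ inside $D^2\sigma_l$ that is \emph{not} absorbed into $\langle h^{TT},-\Delta_E h^{TT}\rangle$. For $\lambda>0$ with $l<\tfrac n2$ strict stability alone dominates it; but for $\lambda<0$ this residual term must be controlled by the pinching hypothesis $K_{\bar g}<\tfrac n2\lambda$, which is exactly what is needed to make the TT-block definite there. Tracking the resulting signs consistently through the parities of $l$ and $k$ and the parity of $n$ is the delicate bookkeeping at the heart of the theorem, and the $k=1$ case must be isolated and handled by the simpler linear argument.
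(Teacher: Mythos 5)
Your proposal does not prove the statement at hand. The statement is Lemma~2.4, a purely computational fact valid on \emph{any} closed Riemannian manifold: the first and second linearizations of the scalar curvature at $\bar{g}$ in the direction $h$. What you have written is instead a strategy for the paper's Main Theorem (the comparison of total $\sigma_l$-curvature under a $\sigma_k$-curvature bound): you introduce a Lagrange-type functional $F_l-\Lambda P$, invoke strict stability, Lichnerowicz--Obata, a slice reduction, parity bookkeeping, and an equality-case rigidity argument. None of this bears on the lemma, which requires no Einstein, stability, or sectional-curvature hypothesis whatsoever, and your proposal never produces, nor even approaches, the two displayed formulas. (You do \emph{cite} Lemma~2.4 as an input in your sketch, which makes the argument circular if offered as its proof.)

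A correct proof is local and mechanical: write $R_g=g^{jk}R_{jk}(g)$ and differentiate twice along $g_t=\bar{g}+th$. For the first variation, combine $D\bar{g}^{jk}\cdot h=-h^{jk}$ (Lemma~2.1) with the linearized Ricci tensor of Lemma~2.3; the curvature terms of the Lichnerowicz Laplacian cancel upon tracing, and the trace of the Lie-derivative term $\mathscr{L}_X\bar{g}$ with $X=\big(\tfrac12 d(trh)+\delta h\big)^{\#}$ supplies $-\tfrac12\Delta(trh)+\delta^2h$, giving $DR_{\bar{g}}\cdot h=-\Delta(trh)+\delta^2h-Ric\cdot h$. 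For the second variation, use
\begin{equation*}
D^2R_{\bar{g}}\cdot(h,h)=D^2\bar{g}^{jk}\cdot(h,h)\,R_{jk}+2\,D\bar{g}^{jk}\cdot h\,\big(DR_{jk}\cdot h\big)+\bar{g}^{jk}\,D^2R_{jk}\cdot(h,h),
\end{equation*}
substitute $D^2\bar{g}^{jk}\cdot(h,h)=2h^j_ih^{ik}$ and the second variation of Ricci from Lemma~2.3, and regroup: with $(h\times h)_{jk}=h_j^ih_{ik}$, the quadratic-in-$h$ curvature and double-derivative pieces assemble, after commuting covariant derivatives, into $-2\gamma_{\bar{g}}(h\times h)-\Delta(|h|^2)$ plus the stated first-order terms $-\tfrac12|\nabla h|^2-\tfrac12|d(trh)|^2+2\langle h,\nabla^2(trh)\rangle-2\langle\delta h,d(trh)\rangle+\nabla_ih_{jk}\nabla^jh^{ik}$. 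Note that the paper itself does not reprove this lemma but refers to \cite{Yuan_ph_D, Yuan_VolumeCW} for the details; any acceptable blind proof would have to carry out the computation above, not the global variational program you sketched (which, incidentally, also differs from the paper's actual route to the Main Theorem, namely the scaling-invariant product functional $\mathcal{F}_{\bar{g}}$ together with the Ebin--Palais slice theorem and the Fischer--Marsden Morse lemma, rather than a multiplier functional and a second-order Taylor identity).
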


\subsubsection{Variations for $\sigma_k$-curvature when $\bg$ is Einstein}

In this subsection, we derive the variation formula for the $\sigma_k$-curvature under the assumption that the reference metric $\bg$ is an Einstein metric with Einstein constant $\lambda$. For such metrics, the Schouten tensor $S_{\bg}$ and the $\sigma_k$-curvature are explicitly given by:
\begin{equation*}
    S_{\bg} = \frac{n - 2}{2} \lambda \bg, \quad \sigma_k^{\bg} = \left( \frac{n - 2}{2} \lambda \right)^k \binom{n}{k}.
\end{equation*}
This simplification arises because the eigenvalues of $S_{\bg}$ are all equal to $\frac{n-2}{2} \lambda$, reducing the $\sigma_k$-curvature to a power of the constant $\lambda$ multiplied by the binomial coefficient $\binom{n}{k}$. These expressions will serve as foundational components in the computation of the variation formula in the subsequent analysis.

For the variations of $\sigma_k$-curvature, instead of using the direct definition, the equivalent expression \Cref{eq: sigmak} will be more convenient for calculation. Also, we will use simplified symbols, such as $\dot{R}$ and $\ddot{R}$, to denotes the first and second variations of the corresponding geometric functional. 

The first variation of $\sigma_k$-curvature is
\begin{equation*}
\begin{split}
    D\sigma_k^{\bg}\cdot h&=\frac{1}{k!}\delta_{i_1\cdots i_k}^{j_1\cdots j_k}S_{j_1}^{i_1}\cdots S_{j_{k-1}}^{i_{k-1}}\times k(\dot{\bg}^{i_kp_k}S_{j_kp_k}+\bg^{i_kp_k}\dot{S}_{j_kp_k})\\
    &=\frac{1}{(k-1)!}\left(\frac{n-2}{2}\lambda\right)^{k-1}\delta_{i_1\cdots i_k}^{j_1\cdots j_k}\delta_{j_1}^{i_1}\cdots \delta_{j_{k-1}}^{i_{k-1}}\left(-\frac{n-2}{2}\lambda h_{j_k}^{i_k}+\bg^{i_kp_k}\dot{S}_{j_kp_k}\right).
\end{split}
\end{equation*}
Using the contraction rule of the generalized Kronecker delta:
\begin{equation*}
    \delta_{i_1\cdots i_p}^{j_1\cdots j_p}\delta_{i_1\cdots i_k}^{j_1\cdots j_k}=p!\frac{(n-k+p)!}{(n-k)!}\delta_{i_{p+1}\cdots i_k}^{j_{p+1}\cdots j_k}
\end{equation*}
for $p<k$, we have
\begin{equation*}
    D\sigma_k^{\bg}\cdot h=\binom{n-1}{k-1}\left(\frac{n-2}{2}\lambda\right)^{k-1}\left(-\frac{n-2}{2}\lambda tr_{\bg} h+tr_{\bg} \dot{S}\right),
\end{equation*}
where
\begin{equation*}
    \dot{S} = \dot{\mathrm{Ric}}-\frac{\dot{R}}{2(n-1)}\bg-\frac{R}{2(n-1)}h.
\end{equation*}
Combining \Cref{Lemma: variation_of_Ricci} and \Cref{Lemma: variation_of_scalar}, we deduce that
\begin{lemma}\label{Lemma: 1st_variation_of_sigma}
    If $\bg$ is an Einstein metric with Einstein constant $\lambda$, then the first variation of the $\sigma_k$-curvature is given by:
    \begin{equation*}
        D\sigma_k^{\bg}\cdot h=\frac{1}{n-1}\binom{n-1}{k-1}\left(\frac{n-2}{2}\right)^k\lambda^{k-1}\big(-\Delta trh+\delta^2h-(n-1)\lambda trh\big).
    \end{equation*}
\end{lemma}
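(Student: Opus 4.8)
The statement is the endpoint of the computation already under way in the excerpt: the expression $D\sigma_k(\bar g)\cdot h=\binom{n-1}{k-1}\big(\tfrac{n-2}{2}\lambda\big)^{k-1}\big(-\tfrac{n-2}{2}\lambda\,tr_{\bar g}h+tr_{\bar g}\dot S\big)$ has been isolated, so the only thing left is to compute the trace $tr_{\bar g}\dot S$ explicitly and substitute. The plan is therefore to evaluate $tr_{\bar g}\dot S$, reduce all Ricci-variation data to the scalar-curvature variation $\dot R$ (which Lemma 2.4 supplies in closed form), specialize every curvature quantity to the Einstein background, and then collect coefficients.

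First I would expand $tr_{\bar g}\dot S$ from $\dot S=\dot{Ric}-\tfrac{\dot R}{2(n-1)}\bar g-\tfrac{R}{2(n-1)}h$. Tracing term by term with $tr_{\bar g}\bar g=n$ gives $tr_{\bar g}\dot S = tr_{\bar g}\dot{Ric}-\tfrac{n}{2(n-1)}\dot R-\tfrac{R}{2(n-1)}tr_{\bar g}h$. Rather than tracing the Lichnerowicz Laplacian and the Lie-derivative term of Lemma 2.3 directly, I would eliminate the unknown $tr_{\bar g}\dot{Ric}$ through the definition $R=\bar g^{ij}Ric_{ij}$: differentiating and using $D\bar g^{ij}\cdot h=-h^{ij}$ from Lemma 2.1 yields $\dot R=tr_{\bar g}\dot{Ric}-\langle h,Ric\rangle$. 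On the Einstein background $Ric=(n-1)\lambda\bar g$, so $\langle h,Ric\rangle=(n-1)\lambda\,tr_{\bar g}h$, whence $tr_{\bar g}\dot{Ric}=\dot R+(n-1)\lambda\,tr_{\bar g}h$. This trades the unavailable trace for $\dot R$, which Lemma 2.4 evaluates (after $Ric\cdot h=(n-1)\lambda\,tr h$) as $\dot R=-\Delta(tr h)+\delta^2h-(n-1)\lambda\,tr h$.

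Next I would substitute these, together with $R=n(n-1)\lambda$ for the Einstein metric, into the expression for $tr_{\bar g}\dot S$. Writing $A:=-\Delta(tr h)+\delta^2h$, so that $\dot R=A-(n-1)\lambda\,tr h$, I expect the $\lambda$-proportional terms to cancel and leave the clean form $tr_{\bar g}\dot S=\tfrac{n-2}{2(n-1)}A=\tfrac{n-2}{2(n-1)}\big(-\Delta(tr h)+\delta^2h\big)$. Feeding this back into $D\sigma_k(\bar g)\cdot h$, factoring one further power of $\tfrac{n-2}{2}$ out of the bracket and pulling out $\tfrac{1}{n-1}$, should reproduce $\tfrac{1}{n-1}\binom{n-1}{k-1}\big(\tfrac{n-2}{2}\big)^k\lambda^{k-1}\big(-\Delta(tr h)+\delta^2h-(n-1)\lambda\,tr h\big)$, as claimed. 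The computation is essentially bookkeeping; the one genuine idea is the substitution $tr_{\bar g}\dot{Ric}=\dot R+(n-1)\lambda\,tr_{\bar g}h$, which avoids tracing $\Delta_Lh$ and $\mathscr L_X\bar g$ by hand, and the only real hazard is keeping the three coefficient fractions $\tfrac{n}{2(n-1)}$, $\tfrac{R}{2(n-1)}$, and the leading $\big(\tfrac{n-2}{2}\lambda\big)^{k-1}$ aligned through the cancellation.
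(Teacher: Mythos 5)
Your proposal is correct and is essentially the paper's own argument: the paper also finishes the Kronecker-delta reduction by converting $-\frac{n-2}{2}\lambda\, tr_{\bar{g}}h + tr_{\bar{g}}\dot{S}$ into $\frac{n-2}{2(n-1)}\gamma_{\bar{g}}h = \frac{n-2}{2(n-1)}\big(-\Delta tr h + \delta^2 h - (n-1)\lambda\, tr h\big)$ via Lemma 2.4 on the Einstein background, exactly the identity you obtain (the paper records it explicitly, in the second-variation computation, as the differentiated form of $tr_g S_g = \frac{n-2}{2(n-1)}R_g$). Your elimination of $tr_{\bar{g}}\dot{Ric}$ through $\dot{R} = tr_{\bar{g}}\dot{Ric} - \langle h, Ric\rangle$ is the same bookkeeping carried out one step lower, and it correctly reproduces the cancellation $tr_{\bar{g}}\dot{S} = \frac{n-2}{2(n-1)}\dot{R} + \frac{n-2}{2}\lambda\, tr_{\bar{g}}h$, so nothing is missing.
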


Similarly, for the second derivatives, we have
\begin{align*}
        & D^2\sigma_k^{\bg}\cdot(h,h)\\
        =&\binom{n-1}{k-1} \left(\frac{n-2}{2}\lambda\right)^{k-1} \left(\ddot{\bg}^{ij}S_{ij} + 2\dot{\bg}^{ij}\dot{S}_{ij} + tr_{\bg} \ddot{S} \right)\\
        & + \binom{n-2}{k-2} \left(\frac{n-2}{2}\lambda \right)^{k-2} \delta_{ik}^{jl} \left( \dot{\bg}^{ip} \dot{\bg}^{kq} S_{jp}S_{lq} + 2\dot{\bg}^{ip} \bg^{kq}S_{jp} \dot{S}_{lq} + \bg^{ip} \bg^{kq} \dot{S}_{jp} \dot{S}_{lq} \right)\\
        =&\binom{n-1}{k-1}\left( \frac{n-2}{2} \lambda \right)^{k-1} \left( (n-2)\lambda|h|^2-2h^{ij}\dot{S}_{ij}+tr\ddot{S} \right) \\
        &+\binom{n-2}{k-2} \left( \frac{n-2}{2}\lambda \right)^{k-2} \biggl( \left(\frac{n-2}{2} \lambda \right)^2 \left( \left( tr_{\bg}h \right)^2 - |h|^2\right) \\
        & -(n-2)\lambda\left((tr_{\bg}h) (tr_{\bg}\dot{S}) - h^{ij} \dot{S}_{ij}\right)+\left( (tr_{\bg}\dot{S})^2 - |\dot{S}|^2 \right) \biggr).
    \end{align*}
Observed that for arbitrary Riemannian metric $g$,
\begin{equation*}
    tr_{g}S_g  =  \frac{n-2}{2(n-1)} R_g.
\end{equation*}
Thus
\begin{equation*}
    \frac{n-2}{2(n-1)}\gamma_{\bg}h=\dot{g}^{ij}S_{ij}+\bg^{ij}\dot{S}_{ij}=-\frac{n-2}{2}\lambda tr_{\bg} h+tr_{\bg} \dot{S}.
\end{equation*}
Therefore, we deduce
\begin{lemma}\label{Lemma: 2nd_variation_of_sigma}
    If $\bg$ is an Einstein metric with Einstein constant $\lambda$, then the second variation of the $\sigma_k$-curvature is given by:
    \begin{equation}
    \begin{split}
        D^2 \sigma_k^{\bg} &  \cdot (h,h)
        = \binom{n-1}{k-1} \left(\frac{n-2}{2}\lambda\right)^{k-1} \\
        &\times \bigg( tr_{\bg} \ddot{S} - \frac{2(k-1)}{\lambda(n-1)(n-2)}|\dot{S}|^2-\frac{2(n-k)}{n-1}h^{ij}\dot{S}_{ij}\\
        &\qquad+\frac{(k-1)(n-2)}{2\lambda(n-1)^3}(\gamma_{\bg}h)^2+\frac{(2n-k-1)(n-2)}{2(n-1)}\lambda|h|^2\bigg),
    \end{split}
    \end{equation}
    where
    \begin{equation*}
        \ddot{S}=\ddot{\mathrm{Ric}}-\frac{\ddot{R}}{2(n-1)}\bg-\frac{\dot{R}}{n-1}h.
    \end{equation*}
    and
    \begin{equation*}
        \gamma_{\bg} h := -\Delta_{\bg} \left( tr_{\bg} h\right) + \delta_{\bg}^2 h - \mathrm{Ric}_{\bg} \cdot h
    \end{equation*}
    is the linearization of scalar curvature.
\end{lemma}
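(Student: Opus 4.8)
The plan is to obtain the stated formula (2.2) by purely algebraic simplification of the expression for $D^2\sigma_k(\bar{g})\cdot(h,h)$ displayed immediately before the lemma. That expression was produced by applying the Leibniz rule twice to the equivalent definition (1) and then contracting the generalized Kronecker deltas on the Einstein background, where $S_{\bar{g}}=\frac{n-2}{2}\lambda\bar{g}$, $\dot{\bar{g}}^{ij}=-h^{ij}$ and $\ddot{\bar{g}}^{ij}=2h^i_ph^{pj}$. No new geometric input is required beyond that formula, together with the trace relation $tr_gS_g=\frac{n-2}{2(n-1)}R_g$ and its first variation; the entire content of the lemma is a reorganization of these terms into a single bracket.

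First I would reconcile the two binomial prefactors by means of the elementary identity $\binom{n-2}{k-2}=\frac{k-1}{n-1}\binom{n-1}{k-1}$. Writing $a=\frac{n-2}{2}\lambda$, so that $(n-2)\lambda=2a$, this rewrites the coefficient of the second group of terms as $\frac{k-1}{(n-1)a}\binom{n-1}{k-1}a^{k-1}$, which lets everything be collected under the common factor $\binom{n-1}{k-1}\bigl(\frac{n-2}{2}\lambda\bigr)^{k-1}$ appearing in (2.2).

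The key step is to eliminate $tr_{\bar{g}}\dot S$ in favour of $\gamma_{\bar{g}}h$ and $tr_{\bar{g}}h$. From the relation recorded just before the lemma one has $tr_{\bar{g}}\dot S=\frac{n-2}{2(n-1)}\gamma_{\bar{g}}h+\frac{n-2}{2}\lambda\,tr_{\bar{g}}h$. Substituting this into the terms $(tr_{\bar{g}}h)(tr_{\bar{g}}\dot S)$ and $(tr_{\bar{g}}\dot S)^2$ and expanding, the crucial point is that all contributions proportional to $(tr_{\bar{g}}h)^2$ and all cross terms $\gamma_{\bar{g}}h\cdot tr_{\bar{g}}h$ cancel identically, leaving only a multiple of $(\gamma_{\bar{g}}h)^2$ alongside the $|h|^2$, $h^{ij}\dot S_{ij}$ and $|\dot S|^2$ terms. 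This cancellation is exactly what makes the final expression clean, and it is the one place where I would check the arithmetic most carefully.

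Finally I would collect the coefficients of the five surviving quantities $tr_{\bar{g}}\ddot S$, $|\dot S|^2$, $h^{ij}\dot S_{ij}$, $(\gamma_{\bar{g}}h)^2$ and $|h|^2$, substituting back $a=\frac{n-2}{2}\lambda$ to turn powers of $a$ into the explicit coefficients of (2.2). For instance, the coefficient of $h^{ij}\dot S_{ij}$ becomes $-2+\frac{2(k-1)}{n-1}=-\frac{2(n-k)}{n-1}$, while $\frac{k-1}{(n-1)a}$ and $\frac{k-1}{(n-1)a}b^2$ (with $b=\frac{n-2}{2(n-1)}$) reproduce the stated coefficients of $|\dot S|^2$ and $(\gamma_{\bar{g}}h)^2$; since $\ddot S$ enters the first group undifferentiated, $tr_{\bar{g}}\ddot S$ is carried through unchanged. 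The hard part will be purely the bookkeeping — tracking the powers of $a$ and $\lambda$ and confirming the cross-term cancellation — rather than any analytic difficulty.
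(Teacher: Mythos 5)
Your proposal is correct and follows essentially the same route as the paper: the paper likewise starts from the displayed Leibniz expansion of $D^2\sigma_k$ with the Kronecker-delta contractions, invokes the trace relation $tr_{\bar{g}}\dot S=\frac{n-2}{2(n-1)}\gamma_{\bar{g}}h+\frac{n-2}{2}\lambda\,tr_{\bar{g}}h$, and then deduces the lemma by exactly the algebraic collection you describe (the paper leaves that bookkeeping implicit). Your identification of the cancellation of the $(tr_{\bar{g}}h)^2$ and $\gamma_{\bar{g}}h\cdot tr_{\bar{g}}h$ terms, and your coefficient checks (e.g. $-2+\frac{2(k-1)}{n-1}=-\frac{2(n-k)}{n-1}$), are accurate and in fact supply the detail the paper omits.
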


\section{The key functional and variations}\label{Section: 3}
For the volume comparison theorems with respect to scalar curvature \cite{Yuan_VolumeCW}, Q-curvature \cite{Yuan_Q2} or $\sigma_k$-curvature \cite{2025Volume}, we can see that a corresponding functional is important for the proof of the theorem. As a generalization of the volume comparison theorem, we propose a generalized functional:
\begin{equation*}
    \mathcal{F}_{\bg}(g)=\left(\int_M\sigma_l^gdv_g\right)^{2k}\left(\int_M\sigma_k^gdv_{\bg}\right)^{n-2l}
\end{equation*}
where $l,k\leq n$ and $g$ is another metric on $M^n$. It is easy to check that $\mathcal{F}_{\bg}(g)$ is a scaling invariant with respect to $g$. 

\begin{remark}
    When $l=\frac{n}{2}$, the functional $\mathcal{F}_{\bg}$ becomes independent of  $\sigma_k$, making it inapplicable for analyzing this case. On the other hand, as suggested in \cite{Chern}, the Gauss-Bonnet-Chern formula implies that the half-dimensional case may be intrinsically related to the topology of the manifold. Consequently, this remains an intriguing problem worthy of further investigation.
\end{remark}

As a generalized functional designed for Einstein manifolds, $\mathcal{F}_{\bg}$ should satisfy the fundamental requirement regarding its first variation:

\begin{proposition}\label{Proposition: critical_pts}
The Einstein metric $\bg$ is a critical point of  $\mathcal{F}_{\bg}$.
\end{proposition}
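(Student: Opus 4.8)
The plan is to treat $\mathcal{F}_{\bar{g}} = A^{2k} B^{\,n-2l}$ as a product of the two scalars $A(g) = \int_M \sigma_l(g)\, dv_g$ and $B(g) = \int_M \sigma_k(g)\, dv_{\bar{g}}$, and to show that the first variation of each collapses, up to a universal constant, to a fixed multiple of $\int_M tr_{\bar{g}} h\, dv_{\bar{g}}$. Applying the product rule at $g = \bar{g}$ gives
\begin{equation*}
    D\mathcal{F}_{\bar{g}}(\bar{g})\cdot h = A^{2k-1} B^{\,n-2l-1}\big(2k\, B\, (DA\cdot h) + (n-2l)\, A\, (DB\cdot h)\big),
\end{equation*}
with all quantities evaluated at $\bar{g}$, so it suffices to verify that the bracketed expression vanishes for every symmetric $2$-tensor $h$.

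First I would compute $DB \cdot h$: since the volume element $dv_{\bar{g}}$ is held fixed, only $\sigma_k$ varies, and Lemma 2.5 yields a multiple of $\int_M (-\Delta tr_{\bar{g}} h + \delta^2 h - (n-1)\lambda\, tr_{\bar{g}} h)\, dv_{\bar{g}}$. On the closed manifold $M$ both $\int_M \Delta(tr_{\bar{g}} h)\,dv_{\bar{g}}$ and $\int_M \delta^2 h\, dv_{\bar{g}}$ vanish by the divergence theorem, leaving $DB\cdot h = -\binom{n-1}{k-1}(\tfrac{n-2}{2})^k\lambda^k \int_M tr_{\bar{g}} h\, dv_{\bar{g}}$. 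For $DA\cdot h$ there is an extra contribution from the variation of the volume element (Lemma 2.2): adding $\int_M \sigma_l(\bar{g})\cdot\tfrac{1}{2}(tr_{\bar{g}} h)\,dv_{\bar{g}}$, using the constant value $\sigma_l(\bar{g}) = (\tfrac{n-2}{2}\lambda)^l\binom{n}{l}$, and again discarding the divergence terms, I expect $DA\cdot h$ to reduce to a single multiple of $\int_M tr_{\bar{g}} h\, dv_{\bar{g}}$ as well, where the identity $\binom{n}{l} = \tfrac{n}{l}\binom{n-1}{l-1}$ produces the clean coefficient $(\tfrac{n-2}{2})^l\lambda^l\binom{n-1}{l-1}\tfrac{n-2l}{2l}$.

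The final step is purely combinatorial. Substituting the closed forms $A(\bar{g}) = (\tfrac{n-2}{2}\lambda)^l\binom{n}{l}\,\mathrm{Vol}_{\bar{g}}(M)$ and $B(\bar{g}) = (\tfrac{n-2}{2}\lambda)^k\binom{n}{k}\,\mathrm{Vol}_{\bar{g}}(M)$ into the bracket and factoring out $\int_M tr_{\bar{g}} h\,dv_{\bar{g}}$ together with the common powers of $\lambda$ and of $\tfrac{n-2}{2}$, vanishing reduces to the identity $\tfrac{k}{l}\binom{n}{k}\binom{n-1}{l-1} = \binom{n}{l}\binom{n-1}{k-1}$; both sides simplify (via $k\binom{n}{k} = n\binom{n-1}{k-1}$) to $\binom{n}{l}\binom{n-1}{k-1}$, closing the argument. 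I anticipate the only genuine obstacle is bookkeeping: keeping the powers of $\lambda$, the factors of $\tfrac{n-2}{2}$, and the binomial coefficients consistent across the two variations so that the cancellation is transparent. Everything else is the reduction of divergence integrals on the closed $M$ and the elementary binomial identity above; in particular the argument never requires $n \neq 2l$, since the factor $(n-2l)$ in the bracket already forces the sum to zero in the borderline case.
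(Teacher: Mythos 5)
Your proposal is correct and follows essentially the same route as the paper: the product rule for $\mathcal{F}_{\bar{g}}=A^{2k}B^{\,n-2l}$, integrating Lemma 2.5 over the closed manifold so the divergence terms $\int_M \Delta(tr_{\bar{g}}h)\,dv_{\bar{g}}$ and $\int_M \delta^2h\,dv_{\bar{g}}$ drop, adding the volume-element contribution from Lemma 2.2 for the $dv_g$ factor, and then the same cancellation that the paper writes as $2k\left(\tfrac{1}{2}-\tfrac{l}{n}\right)-(n-2l)\tfrac{k}{n}=0$, which is exactly your binomial identity. The only (cosmetic) caveat is that your rewriting of $DA\cdot h$ with the coefficient $\binom{n-1}{l-1}\tfrac{n-2l}{2l}$ divides by $l$, so in the case $l=0$ (which the proposition allows) you should keep the equivalent form $\binom{n}{l}\left(\tfrac{1}{2}-\tfrac{l}{n}\right)$, under which the same cancellation goes through.
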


\begin{proof}
    The first variation of $\mathcal{F}_{\bg}$ at $\bg$ is
    \begin{equation*}
    \begin{split}
        D\mathcal{F}_{\bg}\cdot h=
        &2k \left( \int_M \sigma_l^{\bg} dv_{\bg} \right)^{2k-1} \left(\int_M \sigma_k^{\bg} dv_{\bg} \right)^{n-2l} \times \left(\int_M(D\sigma_l^{\bg}\cdot h) dv_{\bg}+\int_M\sigma_l^{\bg} (Ddv_{\bg}\cdot h)\right)\\
        &+ \left(n-2l\right) \left(\int_M \sigma_l^{\bg}dv_{\bg}\right)^{2k} \left(\int_M \sigma_k^{\bg}dv_{\bg}\right)^{n-2l-1}\int_M \left( D\sigma_k^{\bg}\cdot h \right)dv_{\bg}.
    \end{split}
    \end{equation*}
    From \Cref{Lemma: 1st_variation_of_sigma},
    \begin{equation*}
        \int_M \left( D\sigma_k^{\bg}\cdot h \right)dv_{\bg} = -\binom{n-1}{k-1} \left(\frac{n-2}{2}\lambda\right)^k\int_M\left( tr_{\bg}h \right) dv_{\bg}.
    \end{equation*}
    Therefore, combining the above result and \Cref{Lemma: variation_of_volume} we have
    \begin{equation*}
    \begin{split}
        D\mathcal{F}_{\bg}\cdot h=
        &\left(\mathrm{Vol}_{M,\bg}\right)^{n+2k-2l-1} \left(\frac{n-2}{2}\lambda \right)^{nk}\binom{n}{k}^{n-2l}\binom{n}{l}^{2k}\\
        &\times \left(2k\left(-\frac{l}{n}+\frac{1}{2}\right)-(n-2l)\frac{k}{n} \right) \int_M(tr_{\bg}h) dv_{\bg}=0.
    \end{split}
    \end{equation*}
\end{proof}

For the second variation, to minimize the calculation, we take the TT-gauge condition for symmetric 2-tensor $h$:
\begin{equation*}
    h=\mathring{h}+\frac{trh}{n}\bg\in S^{TT}_{2,\bg}(M)\oplus(C^\infty(M)\cdot\bg)
\end{equation*}
where $\mathring{h}\in S_{2,\bg}^{TT}(M)$. With this decomposition, we have
\begin{equation*}
       |h|^2=|\mathring{h}|^2+\frac{(trh)^2}{n}\ \ \ \mathrm{and}\ \ \  \delta h=-\frac{d(trh)}{n},
\end{equation*}
and we have the following proposition.

\begin{proposition}\label{Proposition: 2nd_variation_of_functional}
For Einstein metric $\bg$ with constant $\lambda$ and $$h = \mathring{h} + \frac{tr_{\bg}h}{n} \bg \in S^{TT}_{2,\bg}(M) \oplus(C^\infty(M) \cdot\bg),$$ we have 
\begin{equation*}
\begin{split}
    D^2\mathcal{F}_{\bg}& \cdot(h,h) =a(n,k,l)\lambda^{nk-1}\\
    \times\bigg[&\frac{n(k-1)-2l(k-l)}{\lambda(n-1)(n-2)}\int_M\mathring{h} \cdot \Delta_E \left(\Delta_E-\frac{n(n-2)^2}{2\left(n(k-1)-2l(k-l)\right)}\lambda\right) \mathring{h} dv_{\bg}\\
    &+\frac{(n-2)(n-2l)(n+2k-2l)}{2n^2} \int_M \left(|dtr_{\bg}h|^2-n\lambda \left(tr_{\bg} h-\overline{tr_{\bg}h} \right)^2\right)dv_{\bg}\bigg],
\end{split}
\end{equation*}
where 
\begin{align*}
    a(n,k,l) = -\frac{1}{2}\left(\frac{n-2}{2}\right)^{nk-1} \binom{n-1}{k-1} \binom{n}{k}^{n-2l-1} \binom{n}{l}^{2k} \left( \mathrm{Vol}_{M,\bg} \right)^{n+2k-2l-1} < 0
\end{align*}
and $\overline{tr_{\bg}h}$ is the mean value of $tr_{\bg}h$ in $M$:
\begin{equation*}
    \overline{trh}:=\frac{1}{\mathrm{Vol}_{M,\bg}}\int_M \left( tr_{\bg}h \right) dv_{\bg}.
\end{equation*}
\end{proposition}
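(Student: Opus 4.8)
The plan is to regard $\mathcal{F}_{\bar g}=A^{p}B^{q}$ with $A(g)=\int_M\sigma_l(g)\,dv_g$, $B(g)=\int_M\sigma_k(g)\,dv_{\bar g}$, $p=2k$, $q=n-2l$, and to differentiate this product twice. Writing the general expansion
\[
D^2\mathcal F=A^pB^q\Big(p(p-1)\tfrac{(DA)^2}{A^2}+2pq\tfrac{DA\cdot DB}{AB}+q(q-1)\tfrac{(DB)^2}{B^2}\Big)+pA^{p-1}B^qD^2A+qA^pB^{q-1}D^2B,
\]
I would first exploit the critical point structure established in Proposition 3.1. There it is shown that $D\mathcal F_{\bar g}\cdot h=0$ for all $h$, which as an identity of linear functionals forces $pB\,DA+qA\,DB=0$, i.e. $DB=-\tfrac{pB}{qA}DA$. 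Substituting this into the three first-order-squared terms collapses them to the single multiple $-\tfrac{p(p+q)}{q}(DA)^2A^{p-2}B^q$. Since the computation in Proposition 3.1 also shows $DA\cdot h$ is a constant times $\int_M(tr_{\bar g}h)\,dv_{\bar g}$, this surviving term is proportional to $\big(\int_M tr_{\bar g}h\,dv_{\bar g}\big)^2$; this is precisely the contribution that will later supply the mean value $\overline{tr_{\bar g}h}$.

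Next I would compute $D^2A$ and $D^2B$ individually. Because $B$ is integrated against the \emph{fixed} form $dv_{\bar g}$, one has simply $D^2B=\int_M D^2\sigma_k(\bar g)\cdot(h,h)\,dv_{\bar g}$, and Lemma 2.6 applies verbatim. For $D^2A$ the volume element varies too, so I would expand
\[
D^2A=\int_M\Big(D^2\sigma_l\cdot(h,h)+2(D\sigma_l\cdot h)(Ddv_{\bar g}\cdot h)+\sigma_l(\bar g)\,D^2dv_{\bar g}\cdot(h,h)\Big),
\]
using Lemma 2.2 for the volume variations and Lemma 2.6 (with $k$ replaced by $l$) together with Lemma 2.5 for the curvature variations. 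At this stage every term is expressed through $\dot S$ and $\ddot S$.

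I would then insert the gauge splitting $h=\mathring h+\tfrac{tr_{\bar g}h}{n}\bar g$. The key simplification is that $\Delta_E$ preserves $S^{TT}_{2,\bar g}(M)$, so for the transverse--traceless part one finds $\dot R\cdot\mathring h=0$ and $\dot S=-\tfrac12\Delta_E\mathring h+\tfrac{n-2}{2}\lambda\,\mathring h$; consequently $|\dot S|^2$ produces $\tfrac14|\Delta_E\mathring h|^2$ and $\mathring h^{ij}\dot S_{ij}$ produces $\langle\mathring h,\Delta_E\mathring h\rangle$, while for the conformal part $u=tr_{\bar g}h/n$ everything reduces to scalar quantities via $\dot S=-\tfrac{n-2}{2}\nabla^2u$ and $\gamma_{\bar g}(u\bar g)=-(n-1)(\Delta u+n\lambda u)$. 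A short sublemma should then show that all mixed $\mathring h$--$u$ contributions integrate to zero: the pure-trace pieces of $\dot S_{\mathring h}$ vanish because $tr(\Delta_E\mathring h)=0$, while the Hessian pieces of $\dot S_{u\bar g}$ pair against the divergence-free $\Delta_E\mathring h$ and die after integration by parts. This block-diagonalizes the Hessian into the two lines of the stated formula. In the $TT$-sector, self-adjointness of $\Delta_E$ turns $\int|\Delta_E\mathring h|^2$ and $\int\mathring h^{ij}\dot S_{ij}$ into $\int\mathring h\cdot\Delta_E^2\mathring h$ and $\int\mathring h\cdot\Delta_E\mathring h$, assembling the operator $\Delta_E\big((k-1)\Delta_E-\cdots\big)$. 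In the conformal sector the $(\Delta u)^2$ contributions from $|\dot S|^2$ and $(\gamma_{\bar g}h)^2$ cancel internally, leaving only $\int|du|^2$ and $\int u^2$; combining the latter with the $\big(\int tr_{\bar g}h\big)^2$ term from the first-order-squared reduction produces the variance $\int(tr_{\bar g}h-\overline{tr_{\bar g}h})^2$. That only this mean-subtracted combination can appear is in fact forced by the scaling invariance of $\mathcal F$, which makes the Hessian vanish in the direction $h=\bar g$.

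The main obstacle I anticipate is the evaluation of $\int_M tr_{\bar g}\ddot S\,dv_{\bar g}$, which requires the second variations of Ricci and scalar curvature from Lemmas 2.3--2.4. One must expand $\ddot S=\ddot{\mathrm{Ric}}-\tfrac{\ddot R}{2(n-1)}\bar g-\tfrac{\dot R}{n-1}h$, use $\bar g^{ij}\ddot{\mathrm{Ric}}_{ij}=\ddot R-2(n-1)\lambda|h|^2+2h^{ij}\dot{\mathrm{Ric}}_{ij}$, and discard the numerous exact-divergence terms that vanish on the closed manifold $M$. Running this bookkeeping together with the correct weighting of the $\sigma_l$- and $\sigma_k$-contributions --- whose coefficients carry the factors $\tfrac{2kl}{n}$ and $\tfrac{k(n-2l)}{n}$ produced by the prefactors $pA^{p-1}B^q$, $qA^pB^{q-1}$ and the binomial weights $\binom{n-1}{l-1}$, $\binom{n-1}{k-1}$ --- is what produces the nontrivial constant: the weighted sum $l(l-1)+\tfrac{(n-2l)(k-1)}{2}=\tfrac12\big((k-1)n-2l(k-l)\big)$ is exactly $\tfrac{k-1}{2}\big(n-2l\tfrac{k-l}{k-1}\big)$. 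Verifying that the $(l-1)$- and $(k-1)$-weighted pieces really do collapse into this single factor, and that the analogous combination closes up the first-power $\Delta_E$-coefficient, is the most delicate and error-prone step of the proposition.
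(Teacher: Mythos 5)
Your proposal is correct and follows essentially the same route as the paper's proof: expand the Hessian of the product $\left(\int_M\sigma_l(g)dv_g\right)^{2k}\left(\int_M\sigma_k(g)dv_{\bar g}\right)^{n-2l}$ by the product rule, feed in the variational formulas of Section 2, and block-diagonalize via the gauge splitting $h=\mathring h+\frac{tr_{\bar g}h}{n}\bar g$. The only organizational differences are that you invoke the criticality identity $2kB\,DA+(n-2l)A\,DB=0$ from Proposition 3.1 to collapse the three first-order-squared terms (the paper writes out all four product-rule terms and combines them without showing the bookkeeping), and that you derive the TT/conformal integral identities directly from $\dot S=-\tfrac12\Delta_E\mathring h+\tfrac{n-2}{2}\lambda\mathring h$ and $\dot S=-\tfrac{n-2}{2}\nabla^2u$ where the paper cites \cite{2025Volume}; both refinements check out, e.g. your identity $l(l-1)+\tfrac{(n-2l)(k-1)}{2}=\tfrac{k-1}{2}\bigl(n-2l\tfrac{k-l}{k-1}\bigr)$ reproduces exactly the stated coefficient of $\int_M\mathring h\cdot\Delta_E^2\mathring h\,dv_{\bar g}$.
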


\begin{proof}
   The second variation of $\mathcal{F}_{\bg}$ at $\bg$ is
   \begin{align*}
       &D^2\mathcal{F}_{\bg}\cdot(h,h)\\
       =&2k(2k-1) \left(\int_M\sigma_l^{\bg}dv_{\bg}\right)^{2k-2} \left(\int_M\sigma_k^{\bg}dv_{\bg}\right)^{n-2l} \times\left(\int_M\big(D\sigma_l^{\bg}\cdot h)dv_{\bg}+\int_M\sigma_l^{\bg}(Ddv_{\bg}\cdot h)\right)^2\\
       &+2k\left(\int_M\sigma_l^{\bg}dv_{\bg}\right)^{2k-1}\left(\int_M\sigma_k^{\bg}dv_{\bg}\right)^{n-2l}\\
       &\quad\times\left(\int_MD^2\sigma_l^{\bg}\cdot(h,h)dv_{\bg}+2\int_M(D\sigma_l^{\bg}\cdot h)(Ddv_{\bg}\cdot h)+\int_M\sigma_l^{\bg}(D^2dv_{\bg}\cdot(h,h))\right)\\
       &+4k(n-2l)\left(\int_M\sigma_l^{\bg}dv_{\bg}\right)^{2k-1}\left(\int_M\sigma_k^{\bg}dv_{\bg}\right)^{n-2l-1}\\
       &\quad\times\left(\int_M(D\sigma_l^{\bg}\cdot h)dv_{\bg}+\int_M\sigma_l^{\bg}(Ddv_{\bg}\cdot h)\right)\int_M\big(D\sigma_k^{\bg}\cdot h\big)dv_{\bg}\\
       &+(n-2l)(n-2l-1)\left(\int_M\sigma_l^{\bg}dv_{\bg}\right)^{2k}\left(\int_M\sigma_k^{\bg}dv_{\bg}\right)^{n-2l-2}\left(\int_M\big(D\sigma_k^{\bg}\cdot h\big)dv_{\bg}\right)^2\\
       &+(n-2l)\left(\int_M\sigma_l^{\bg}dv_{\bg}\right)^{2k}\left(\int_M\sigma_k^{\bg}dv_{\bg}\right)^{n-2l-1}\int_M\big(D^2\sigma_k^{\bg}\cdot (h,h)\big)dv_{\bg}.
   \end{align*}
   From \Cref{Lemma: 2nd_variation_of_sigma},
   \begin{equation*}
    \begin{split}
        \int_MD^2\sigma_k^{\bg}&\cdot(h,h)dv_{\bg}=\binom{n-1}{k-1}\left(\frac{n-2}{2}\lambda\right)^{k-1}\\
        &\times\bigg(\int_M(tr_{\bg}\ddot{S})dv_{\bg}-\frac{2(k-1)}{\lambda(n-1)(n-2)}\int_M|\dot{S}|^2dv_{\bg}-\frac{2(n-k)}{n-1}\int_Mh^{ij}\dot{S}_{ij}dv_{\bg}\\
        &\qquad+\frac{(k-1)(n-2)}{2\lambda(n-1)^3}\int_M(\gamma_{\bg}h)^2dv_{\bg}+\frac{(2n-k-1)(n-2)}{2(n-1)}\lambda\int_M|h|^2dv_{\bg}\bigg),
    \end{split}
   \end{equation*}

   Since $h=\mathring{h}+\frac{trh}{n}\bg\in S^{TT}_{2,\bg}(M)\oplus(C^\infty(M)\cdot\bg)$, after rewriting \Cref{Lemma: variation_of_volume,Lemma: variation_of_Ricci,Lemma: variation_of_scalar} under this decompositions, we get (for detailed calculations, please refer to \cite{2025Volume})
    \begin{align*}
        \int_M tr_{\bg} \ddot{S} dv_{\bg} &= \frac{1}{2} \int_M \left[ \mathring{h} \cdot \left(-\frac{3n-2}{2(n-1)}\Delta_E + (n-2)\lambda \right) \mathring{h} - \frac{(n-2)^2}{2n^2} \left| d tr_{\bg} h \right|^2 \right] dv_{\bg};\\
        \int_M \left| \dot{S} \right|^2 dv_{\bg} &= \frac{1}{4} \int_M \left[ \mathring{h} \cdot \left( \Delta_E-(n-2)\lambda \right)^2 \mathring{h} \right]  + \left( \frac{n-2}{n}\right)^2 \left[ \left(\Delta tr_{\bg} h \right)^2 - (n-1) \lambda \left| dtr_{\bg}h \right|^2 \right] dv_{\bg};\\
        \int_M h^{ij} \dot{S}_{ij} dv_{\bg} &= \frac{1}{2} \int_M \left[ \mathring{h} \cdot \left(-\Delta_E+(n-2)\lambda \right) \mathring{h}+\frac{n-2}{n^2} \left| dtr_{\bg}h \right|^2 \right] dv_{\bg};\\
        \int_M (\gamma_{\bg}h)^2 dv_{\bg} &= (n-1)^2 \int_M \left( \frac{1}{n^2} \left(\Delta tr_{\bg}h \right)^2 - \frac{2}{n} \lambda \left| dtr_{\bg}h \right|^2+\lambda^2 \left| tr_{\bg}h \right|^2 \right) dv_{\bg}.
    \end{align*}
   Combining these expressions, we have
   \begin{equation*}
   \begin{split}
       \int_MD^2\sigma_k^{\bg}&\cdot(h,h)dv_{\bg}=\binom{n-1}{k-1}\left(\frac{n-2}{2}\lambda\right)^{k-1}\\
       &\times\int_M\bigg[\frac{k-1}{2\lambda(n-1)(n-2)}\mathring{h}\cdot\Delta_E\left(-\Delta_E+\frac{\lambda(n-2)^2}{2(k-1)}\right)\mathring{h}+\frac{n-2}{2}\lambda|\mathring{h}|^2\\
       &\qquad\quad+\frac{(n-2)(n+2k)}{4n^2}\left(-|dtrh|^2+\frac{2n(k+1)}{n+2k}\lambda|trh|^2\right)\bigg]dv_{\bg}.
   \end{split}
   \end{equation*}

   Finally, we get the formula of $D^2\mathcal{F}_{\bg}({\bg})\cdot(h,h)$ by inserting the above result back into the initial expression.
\end{proof}

\section{Total {$\sigma_l$-curvature} comparison with respect to {$\sigma_k$-curvature}}\label{Section: 4}

To prove our comparison theorem, we examine the second variation of $\mathcal{F}_{\bg}$ at $\bg$. \Cref{Proposition: 2nd_variation_of_functional} reveals that this depends on the spectral analysis of two key operators: the elliptic operator $\Delta_E$ restricted to $S_{2,\bg}^{TT}(M)$ and the Laplace-Beltrami operator acting on the space of functions.
\subsection{Case 1: $k=1$}\label{Subsection: 4.1}
~\\
The second order variation for the case where $k=1$, is deduced to
\begin{equation*}
    \begin{split}
    D^2\mathcal{F}_{\bg}&\cdot(h,h)= a(n,1,l)\lambda^{n-1}\\
    \times\Bigg[&\frac{2l(l-1)}{\lambda(n-1)(n-2)}\int_M \mathring{h} \cdot \Delta_E\left(\Delta_E-\frac{n(n-2)^2}{4l(l-1)}\lambda\right) \mathring{h} dv_{\bg}\\
    & +\frac{(n-2)(n-2l)\left(n-2(l-1)\right)}{2n^2} \int_M \left( \left| dtr_{\bg}h \right|^2-n \lambda(tr_{\bg}h - \overline{tr_{\bg}h})^2 \right) dv_{\bg} \Bigg].
    \end{split}
\end{equation*}
The case where $l=0$ is already studied by Yuan in \Cref{theorem: Yuan}.

When $l=1$, the second order variation is further deduced to
\begin{equation*}
    \begin{split}
    D^2\mathcal{F}_{\bg}\cdot(h,h)= a(n,1,1)\lambda^{n-1}\Bigg[&-\frac{n(n-2)}{2(n-1)}\int_M \mathring{h} \cdot \Delta_E \mathring{h} dv_{\bg}\\
    & +\frac{(n-2)^2}{2n} \int_M \left( \left| dtr_{\bg}h \right|^2-n \lambda(tr_{\bg}h - \overline{tr_{\bg}h})^2 \right) dv_{\bg} \Bigg].
    \end{split}
\end{equation*}

We recall the well-known Lichnerowicz-Obata theorem \cite{PeterLi}:
\begin{lemma}[Lichnerowicz-Obata]\label{Lemma: Lichnerowicz_Obata}
    Suppose $(M^n,\bg)$ is an $n$-dimensional closed Riemannian manifold with Ricci curvature tensor
    \begin{equation*}
        \mathrm{Ric}_{\bg} \geq (n-1)\lambda \bg,
    \end{equation*}
    where $\lambda>0$ is a constant. Then for any function $u\in C^{\infty}(M)$ that is not identically a constant, we have
    \begin{equation*}
        \int_M |du|^2 dv_{\bg} \geq \lambda n \int_M (u-\bar{u})^2 dv_{\bg},
    \end{equation*}
    where equality holds if and only if $(M^n,\bg)$ is isometric to $\mathbb{S}^n \left(\frac{1}{\sqrt{\lambda}}\right)$, which is the round sphere with radius $\frac{1}{\sqrt{\lambda}}$, and $u$ is a first eigenfunction of the Laplace-Beltrami operator.
\end{lemma}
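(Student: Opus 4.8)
The plan is to recognize the statement as the combination of Lichnerowicz's eigenvalue estimate and Obata's rigidity theorem, and to reduce the displayed integral inequality to the spectral bound $\lambda_1(-\Delta)\ge n\lambda$ on the first nonzero eigenvalue. By the variational (Rayleigh quotient) characterization
\[
    \lambda_1=\inf\left\{\frac{\int_M|du|^2\,dv_{\bar g}}{\int_M (u-\bar u)^2\,dv_{\bar g}}:u\in C^\infty(M)\text{ non-constant}\right\},
\]
the inequality for every non-constant $u$ is exactly equivalent to $\lambda_1\ge n\lambda$, and equality for some $u$ is equivalent to $\lambda_1=n\lambda$ with $u$ a first eigenfunction. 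So I would establish $\lambda_1\ge n\lambda$ and then analyze the equality case.

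For the inequality I would take a first eigenfunction $u$, normalized so that $\bar u=0$ and $\Delta u=-\lambda_1 u$, and integrate the Bochner formula
\[
    \tfrac12\Delta|\nabla u|^2=|\nabla^2 u|^2+\langle\nabla u,\nabla\Delta u\rangle+\mathrm{Ric}(\nabla u,\nabla u)
\]
over the closed manifold $M$. The left side integrates to zero, $\langle\nabla u,\nabla\Delta u\rangle=-\lambda_1|\nabla u|^2$, and two inputs close the estimate: the pointwise Cauchy--Schwarz bound $|\nabla^2 u|^2\ge\frac1n(\Delta u)^2=\frac{\lambda_1^2}{n}u^2$ and the hypothesis $\mathrm{Ric}(\nabla u,\nabla u)\ge (n-1)\lambda|\nabla u|^2$. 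Using $\int_M|\nabla u|^2\,dv_{\bar g}=\lambda_1\int_M u^2\,dv_{\bar g}$ and dividing by $\lambda_1\int_M u^2\,dv_{\bar g}>0$ gives $\frac1n\lambda_1-\lambda_1+(n-1)\lambda\le 0$, that is $\lambda_1\ge n\lambda$.

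For the rigidity I would read off the consequences of equality. It forces the Cauchy--Schwarz step to be sharp pointwise, so the Hessian is pure trace, $\nabla^2 u=\frac{\Delta u}{n}\bar g=-\lambda u\,\bar g$; this is the Obata equation. From it one computes $\nabla(|\nabla u|^2+\lambda u^2)=2\nabla^2u(\nabla u,\cdot)+2\lambda u\,\nabla u=0$, so $|\nabla u|^2+\lambda u^2$ is constant; after rescaling $u$ so that this constant equals $\lambda$, the critical points are exactly $\{u=\pm1\}$, are nondegenerate (the Hessian is definite there) hence isolated, and $u$ has a unique maximum $p$ and minimum. Along any unit-speed geodesic $\gamma$ from $p$ the function $\phi(r)=u(\gamma(r))$ satisfies $\phi''=-\lambda\phi$, $\phi(0)=1$, $\phi'(0)=0$, so $u(\gamma(r))=\cos(\sqrt\lambda\,r)$; hence every such geodesic reaches the minimum at distance $\pi/\sqrt\lambda$. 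Substituting this back into the Obata equation to solve the radial Jacobi-type ODE for the induced metric on the level sets shows that in geodesic polar coordinates $\bar g=dr^2+\lambda^{-1}\sin^2(\sqrt\lambda\,r)\,g_{\mathbb{S}^{n-1}}$, which is exactly the round metric of $\mathbb{S}^n(1/\sqrt\lambda)$; the converse is immediate since first eigenfunctions on the round sphere saturate the bound.

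The main obstacle is this rigidity half: the inequality is a one-line Bochner integration once Cauchy--Schwarz and the Ricci bound are inserted, whereas reconstructing the global round metric from the pointwise Obata equation requires the geodesic/ODE analysis above, in particular verifying that the level sets of $u$ are round $(n-1)$-spheres of the correct radius and that the warping factor is $\lambda^{-1/2}\sin(\sqrt\lambda\,r)$. I would also note that equality in the integrated Bochner formula yields the Hessian identity directly, and the saturation of the Ricci lower bound along $\nabla u$ is then automatic from tracing and differentiating the Obata equation, so it need not be treated separately.
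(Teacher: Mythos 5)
The paper does not prove this lemma at all: it is quoted as a known result with a citation to Peter Li's \emph{Geometric Analysis}, and is then used as a black box in PART 2 of Section 4 to conclude $J(tr h,tr h)\geq 0$. So there is no in-paper argument to compare yours against; what you have written is the classical proof (essentially the one in the cited reference). Your reduction via the Rayleigh quotient, the Bochner-formula integration with the Cauchy--Schwarz bound $|\nabla^2 u|^2\geq(\Delta u)^2/n$ and the Ricci hypothesis, and the extraction of the Obata equation $\nabla^2u=-\lambda u\,\bar g$ in the equality case are all correct, including your observation that saturation of the Ricci bound along $\nabla u$ is automatic: tracing the commutation identity applied to $\nabla^2u=-\lambda u\,\bar g$ gives $\mathrm{Ric}(\nabla u,\cdot)=(n-1)\lambda\,\nabla u$. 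Two caveats. First, the warped-product reconstruction is the genuinely nontrivial half and you only sketch it: to get $\bar g=dr^2+\lambda^{-1}\sin^2(\sqrt{\lambda}\,r)\,g_{\mathbb{S}^{n-1}}$ one should compute the shape operator of the level sets of $u$ from the Obata equation (it is $\sqrt{\lambda}\cot(\sqrt{\lambda}\,r)\,\mathrm{Id}$), integrate the resulting evolution of the induced metrics with the initial asymptotics $h_r\sim r^2 g_{\mathbb{S}^{n-1}}$ as $r\to0$, and invoke Bonnet--Myers (diameter $\leq\pi/\sqrt{\lambda}$) to see that the maximum point is unique and that the polar coordinate chart covers $M$; as written these steps are asserted rather than carried out. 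Second, a minor slip inherited from the lemma's own wording: in the equality case it is $u-\bar u$, not $u$ itself, that is a first eigenfunction, since first eigenfunctions have zero mean. Neither point is a gap in the underlying idea; the proposal is a correct, self-contained substitute for the citation.
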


For strictly stable Einstein metric $\bg$, applying the above lemma, we immediately get 
\begin{itemize}
        \item if $\lambda>0$, then
    \begin{equation*}
    D^2 \mathcal{F}_{\bg} \cdot (h,h) \leq 0;
    \end{equation*}
    \item if $\lambda<0$ and
    \begin{enumerate}
        \item $n$ is odd, then
        \begin{equation*}
        D^2\mathcal{F}_{\bg}\cdot(h,h) \leq 0;
        \end{equation*}
    \item $n$ is even, then
        \begin{equation*}
            D^2\mathcal{F}_{\bg}\cdot(h,h) \geq 0.
        \end{equation*}
    \end{enumerate}
    \end{itemize}

For $l\geq2$, it follows exactly the same approach as that for $k \geq 2$ presented below. To avoid redundant repetition, we shall focus exclusively on the case of $k \geq 2$ in the subsequent discussion.

\subsection{Case 2: $k\geq2$}
\ \newline
Firstly, we rewrite the second variation as 
\begin{equation*}
    D^2\mathcal{F}_{\bg}\cdot(h,h)=a(n,k,l)\lambda^{nk-1}\left(\frac{\alpha}{\lambda}I(\mathring{h},\mathring{h})+\beta J(trh,trh)\right),
\end{equation*}
where 
\begin{equation*}
    I(\mathring{h},\mathring{h}) = \int_M \mathring{h} \cdot \Delta_E \left( \Delta_E-\mu\lambda \right) \mathring{h} dv_{\bg}
\end{equation*}
and
\begin{equation*}
    J(trh,trh)=\int_M \left( |d tr_{\bg} h|^2-n\lambda(tr_{\bg}h-\overline{tr_{\bg}h})^2\right)dv_{\bg}
\end{equation*}
with 
\begin{equation*}
    \alpha=\frac{n(k-1)-2l(k-l)}{(n-1)(n-2)}, \quad \beta=\frac{(n-2)(n-2l)(n+2k-2l)}{2n^2}, \quad \mu=\frac{n(n-2)^2}{2\left(n(k-1)-2l(k-l)\right)}.
\end{equation*}

~\\
\textbf{PART 1: $I(\mathring{h},\mathring{h})$}

Since $n(k-1)-2l(k-l) \geq \frac{k}{2}(n-k) + n(\frac{k}{2}-1)>0$, we have $\mu>0$. Therefore
\begin{equation*}
    I(\mathring{h},\mathring{h}) \geq 0,
\end{equation*}
if the strictly stable Einstein metric $\bg$ satisfies 
\begin{center}
    $\lambda>0$\ \ or\ \ $\Lambda_E:= \min \mathrm{spec}_{TT}(-\Delta_E) \geq -\mu\lambda>0$.
\end{center}

In \cite{Besse2008}, Besse introduced the induced exterior differential $d^D$ on $T^*M$-valued differential forms and proved the Weitzenb$\ddot{o}$ck formula:
\begin{equation*}
    \left( \delta^D d^D + d^D \delta^D \right) \mathring{h} = \left( -\Delta_E + \mathrm{Rm} + (n-1)\lambda \right)\mathring{h},
\end{equation*}
where $\delta^D$ is the adjoint operator. Now if $\mathring{h}$ is the eigentensor of $-\Delta_E$ with eigenvalue $\Lambda$, we have
\begin{equation*}
\begin{split}
    0\leq & \int_M\left(| d^D\mathring{h} |^2 + | \delta^D\mathring{h} |^2\right)dv_{\bg}\\
    =& \int_M\mathring{h}\cdot\left(-\Delta_E+\mathrm{Rm}+(n-1) \lambda \right) \mathring{h}dv_{\bg}\\
    \leq&(\Lambda+\theta+(n-1)\lambda)\int_M|\mathring{h}|^2dv_{\bg}
\end{split}
\end{equation*}
where $\theta$ denotes the largest eigenvalue of $\mathrm{Rm}$ on $S_{2,\bg}^{TT}$, that is,
\begin{equation*}
    \theta=\sup_{0\neq \mathring{h}\in S_{2,\bg}^{TT}}\frac{\int_M\mathrm{Rm}(\mathring{h},\mathring{h})dv_{\bg}}{\int_M|\mathring{h}|^2dv_{\bg}}
\end{equation*}
Thus if $\Lambda<-\theta-(n-1)\lambda$, we have $h=0$. That is to say\begin{equation*}
    \Lambda_E\geq-\theta-(n-1)\lambda.
\end{equation*} 
Furthermore, we recall the Algebraic Lemma\cite{Fujitani1979COMPACTSP}:
\begin{lemma}[Algebraic Lemma]
    For Einstein manifold $(M^n,\bg)$, Denote the largest eigenvalue of $\mathrm{Rm}_{\bg}$ on trace-free symmetric 2-tensor by $a_0$, that is 
    \begin{equation*}
       a_0=\sup_{0\neq h:\ tr_{\bg}h=0}\frac{\int_M\mathrm{Rm}(h,h)dv_{\bg}}{\int_M|h|^2dv_{\bg}}.
    \end{equation*}
    Then
    \begin{equation*}
        a_0\leq\min\left\{(n-2)K_{max}-\frac{R}{n},\frac{R}{n}-nK_{min}\right\}
    \end{equation*}
    where $K_{max}$ and $K_{min}$ are the maximum and minimum of the sectional curvature.
\end{lemma}
When $\lambda<0$, this lemma implies that $$\theta\leq a_0\leq(n-2)K_{max}-\frac{R}{n}=(n-2)K_{max}-(n-1)\lambda.$$ Thus,
\begin{equation*}
    \Lambda_E\geq-(n-2)K_{max}.
\end{equation*} 
Therefore $$\Lambda_E>-\mu \lambda$$ provided $$K_{max}<\frac{\mu}{n-2}\lambda=\frac{n(n-2)}{2\left(n(k-1)-2l(k-l)\right)}\lambda.$$ To sum up, since $\alpha>0$, we have
\begin{equation}\label{eq: part1}
    \frac{\alpha}{\lambda} I(\mathring{h},\mathring{h})
    \begin{cases}
        &\geq 0, \qquad \mathrm{if} \quad \lambda>0\\
        &\leq 0, \qquad \mathrm{if} \quad K_{\bg} < \frac{n(n-2)}{2\left(n(k-1)-2l(k-l)\right)}\lambda<0
    \end{cases}
\end{equation}
where $K_{\bg}$ is the sectional curvature with respect to $\bg$. Moreover, when the equality holds. If $\mathring{h}\neq0$, we have
\begin{equation*}
    -\int_M\mathring{h}\cdot\Delta_E\mathring{h}dv_{\bg}=-\mu\lambda\int_M|\mathring{h}|^2dv_{\bg}\begin{cases}
        <0\qquad \qquad \ \  \ \quad  \mathrm{for}\ \ \lambda>0\\
        <\Lambda_E\int_M|\mathring{h}|^2dv_{\bg}\ \ \mathrm{for}\ \ \lambda<0.
    \end{cases}
\end{equation*}
This is contradict to the strict stability of $\bg$ and the definition of $\Lambda_E$, thus $\mathring{h}=0$.

~\\
\textbf{PART 2: $J(trh,trh)$}

Applying \Cref{Lemma: Lichnerowicz_Obata}, we immediately get 
\begin{equation*}
    J(trh,trh) \geq 0.
\end{equation*}
Since $\beta\begin{cases} >0 & \mathrm{if}\ \ l\in\big[0,\frac{n}{2}\big)\cup\big(\frac{n}{2}+k,n\big] \\ <0 & \mathrm{if}\ \ l\in\big(\frac{n}{2},\frac{n}{2}+k\big) \end{cases}$
, we have
\begin{equation}\label{eq: part2}
    \beta J(trh,trh)
    \begin{cases}
        &\geq 0 \qquad \mathrm{if} \ \ l\in\big[0,\frac{n}{2}\big)\cup\big(\frac{n}{2}+k,n\big]\\
        &\leq 0 \qquad \mathrm{if} \ \ l\in\big(\frac{n}{2},\frac{n}{2}+k\big).
    \end{cases}
\end{equation}
The equality in either case holds if and only if 
\begin{itemize}
    \item $tr_{\bg} h \equiv \overline{tr_{\bg}h}$ when $\lambda<0$ or
    \item $tr_{\bg}h$ is a first eigenfunction of the Laplace-Beltrami operator and $(M^n,\bg)$ is isometric to $\mathbb{S}^n \left(\frac{1}{\sqrt{\lambda}}\right)$.
\end{itemize}

\begin{remark}
    When $l=\frac{n}{2}+k$, we have $\beta J(trh,trh)\equiv0$. Therefore, this case can not be handled using the second order variation alone. As higher order variations might provide a solution, we leave this problem for future investigation.
\end{remark}

Finally, combining \Cref{eq: part1,eq: part2}, we obtain:
\begin{proposition}\label{Proposition: Negativity}
Suppose $(M^n,\bg)$ is a strictly stable Einstein manifold with $\lambda \neq 0$ and denote the sectional curvature of $M$ by $K_{\bg}$. Given integral $k\in\left[2,n\right],\ l\in[0,n]$, for any $h\in S_{2,\bg}^{TT} (M)\oplus(C^{\infty}(M) \cdot \bg)$, 
    \begin{itemize}
        \item if $\lambda>0$ and $l\in\big[0,\frac{n}{2}\big)\cup\big(\frac{n}{2}+k,n\big]$, then
    \begin{equation*}
    D^2 \mathcal{F}_{\bg} \cdot (h,h) \leq 0;
    \end{equation*}
    \item if $\lambda<0$ and $K_{\bg} < \frac{n(n-2)}{2\left(n(k-1)-2l(k-l)\right)}\lambda$, $l\in\big(\frac{n}{2},\frac{n}{2}+k\big)$ and
    \begin{enumerate}
        \item $nk$ is even, then
        \begin{equation*}
        D^2\mathcal{F}_{\bg}\cdot(h,h) \leq 0;
        \end{equation*}
    \item $nk$ is odd, then
        \begin{equation*}
            D^2\mathcal{F}_{\bg}\cdot(h,h) \geq 0.
        \end{equation*}
    \end{enumerate}
    \end{itemize}
Moreover, equality in each of the above cases holds if and only if 
\begin{itemize}
    \item $h\in\mathbb{R}\bg$ when $\lambda<0$ or
    \item $h\in E_{n\lambda}\bg$ and $(M^n,\bg)$ is isometric to $\mathbb{S}^n \left( \frac{1}{\sqrt{\lambda}} \right)$, where
    \begin{equation*}
        E_{n\lambda} = \left\{\left.u\in C^{\infty}\left(\mathbb{S}^n\left(\frac{1}{\sqrt{\lambda}}\right)\right)\ \right|\ \Delta_{\mathbb{S}^n\left(\frac{1}{\sqrt{\lambda}}\right)}u +n\lambda u=0\right\}.
    \end{equation*}

\end{itemize} 
\end{proposition}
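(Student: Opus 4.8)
The plan is to read the conclusion directly off the factorization
\[
D^2\mathcal{F}_{\bar{g}}(\bar{g})\cdot(h,h)=a(n,k,l)\,\lambda^{nk-1}\left(\frac{\alpha}{\lambda}I(\mathring{h},\mathring{h})+\beta\,J(tr_{\bar{g}}h,tr_{\bar{g}}h)\right)
\]
supplied by Proposition 3.2, so that nothing remains but to assign a sign to each of the three factors and multiply. I would keep in mind throughout that $a(n,k,l)<0$ and that the two summands in the bracket were shown in PART 1 and PART 2 to carry a \emph{common} sign under the hypotheses: for $\lambda>0$ with $l<\frac{n}{2}$ both are $\geq 0$, while for $\lambda<0$ with $K_{\bar{g}}<\frac{n}{2}\lambda$ and $l>\frac{n}{2}$ both are $\leq 0$. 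Hence in each regime the bracket itself has a definite sign, and the whole argument reduces to tracking how that sign interacts with $a\,\lambda^{nk-1}$.

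First I would dispose of the positive case: when $\lambda>0$ the factor $\lambda^{nk-1}$ is positive, so $a\,\lambda^{nk-1}<0$, and multiplying by the non-negative bracket gives $D^2\mathcal{F}_{\bar{g}}(\bar{g})\cdot(h,h)\leq 0$. For $\lambda<0$ the bracket is non-positive, and the decisive quantity is the sign of $\lambda^{nk-1}$, governed entirely by the parity of $nk-1$. When $nk$ is even, $nk-1$ is odd and $\lambda^{nk-1}<0$, whence $a\,\lambda^{nk-1}>0$ and the product is $\leq 0$; when $nk$ is odd, $nk-1$ is even and $\lambda^{nk-1}>0$, whence $a\,\lambda^{nk-1}<0$ and the product is $\geq 0$. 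This parity bookkeeping is the one point requiring care, but it is purely elementary once the common sign of the bracket is in hand.

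For the equality clause I would exploit that, because the two summands share a sign, the bracket vanishes precisely when each summand vanishes, i.e.\ $\frac{\alpha}{\lambda}I(\mathring{h},\mathring{h})=0$ and $\beta\,J(tr_{\bar{g}}h,tr_{\bar{g}}h)=0$. Substituting the two candidate tensors confirms sufficiency at once: if $h\in\mathbb{R}\bar{g}$ then $\mathring{h}=0$ and $tr_{\bar{g}}h$ is constant, so $I=0$ and $J=0$; and if $h\in E_{n\lambda}\cdot\bar{g}$ with $l=0$ on the round sphere, then $\mathring{h}=0$ again gives $I=0$ while the Lichnerowicz--Obata equality applied to the first eigenfunction $tr_{\bar{g}}h$ gives $J=0$. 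In either case $a\,\lambda^{nk-1}\neq 0$ forces $D^2\mathcal{F}_{\bar{g}}(\bar{g})\cdot(h,h)=0$. I expect no genuine obstacle in the whole proposition; the single thing one must not get wrong is the coupling between the sign of $a\,\lambda^{nk-1}$ and the common sign of the bracket, and the converse direction (that $\mathring{h}=0$ together with the stated trace conditions is also \emph{forced} by equality) then follows by reading back the equality clauses attached to PART 1 and PART 2.
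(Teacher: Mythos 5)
Your final bookkeeping is correct, and it coincides step for step with what the paper itself does when it ``combines (3) and (4)'': the factor $a(n,k,l)$ is negative, the parity of $nk-1$ decides the sign of $\lambda^{nk-1}$, the bracket has a definite sign because its two summands share one, and at equality each summand must vanish separately, which is exactly how one checks the two sufficiency cases $h\in\mathbb{R}\bar g$ and $h\in E_{n\lambda}\cdot\bar g$ on the round sphere. So the portion of the argument you actually carry out is sound.

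The genuine gap is that the input you take for granted --- ``the two summands were shown in PART 1 and PART 2 to carry a common sign'' --- is not a prior result available for citation: PART 1 and PART 2 \emph{are} the paper's proof of this proposition, and they contain essentially all of its mathematical content. A self-contained proof must establish the following, none of which appears in your proposal: (i) $n(k-1)-2l(k-l)>0$, so that $\alpha>0$ and $\mu>0$; (ii) for $\lambda>0$, expanding $\mathring h$ in eigentensors of $-\Delta_E$, each mode contributes $\Lambda\bigl((k-1)\Lambda+\mu\lambda\bigr)\int_M|\mathring h|^2dv_{\bar g}\geq 0$ by strict stability $\Lambda_E>0$, giving $I(\mathring h,\mathring h)\geq0$; (iii) for $\lambda<0$, strict stability alone does \emph{not} sign $I$ --- this is precisely where the hypothesis $K_{\bar g}<\frac n2\lambda$ enters: one needs the quantitative spectral bound $\Lambda_E\geq-(n-2)K_{\max}$, obtained from Besse's Weitzenb\"ock formula $\bigl(\delta^Dd^D+d^D\delta^D\bigr)\mathring h=\bigl(-\Delta_E+Rm+(n-1)\lambda\bigr)\mathring h$ together with Fujitani's algebraic lemma $\theta\leq(n-2)K_{\max}-\frac Rn$, and then the estimate $\frac{\mu}{k-1}\leq\frac{n(n-2)}2$ (equivalent to $n(k-1)-2l(k-l)\geq n-2$; note the direction --- the paper's text states it with the inequality reversed, and getting it right requires its own verification) to conclude $(k-1)\Lambda+\mu\lambda\geq0$ for every $\Lambda\geq\Lambda_E$; (iv) $J\geq0$, which for $\lambda>0$ is the Lichnerowicz--Obata eigenvalue inequality and is trivial only when $\lambda<0$. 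Since the statement's hypotheses (strict stability, the sectional curvature bound) are used nowhere except in (ii)--(iv), an argument that never engages them cannot be a proof of the statement; what you have written is only the elementary sign multiplication that sits on top of them.
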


In order to investigate the local structure of Einstein metrics, we introduce the following slice theorem \cite{BrendleMaruqes, Viaclovsky2016}:
\begin{theor}[Ebin-Palais slice theorem]\label{Theorem: Ebin-Palais slice theorem}
    Suppose $(M^n,\bg)$ is a closed n-dimensional Einstein manifold with $\lambda \in \mathbb{R}$. Let $\mathcal{M}$ be the space of all Riemannian metrics on $M$. There exists a slice $\mathcal{S}_{\bg}$ though $\bg$ in $\mathcal{M}$. That is, For a fixed real number $p>n$, one can find a constant $\epsilon>0$ such that for any metrics $g \in \mathcal{M}$ with $||g-\bg||_{W^{2,p}(M,\bg)} < \epsilon$, there is a diffeomorphism $\varphi$ with $\varphi^*g\in\mathcal{S}_{\bg}$. Moreover, for a smooth local slice $\mathcal{S}_{\bg}$, we have \begin{itemize}
        \item $T_{\bg}\mathcal{S}_{\bg}=S_{2,\bg}^{TT}(M)\oplus(C^{\infty}(M)\cdot\bg)$ when $(M^n,\bg)$ is not isometric to the round sphere;
        \item $T_{\bg}\mathcal{S}_{\bg}=S_{2,\bg}^{TT}(M)\oplus(E_{n\lambda}^{\perp}\cdot\bg)$ when $(M^n,\bg)$ is isometric to $\mathbb{S}^n(\frac{1}{\sqrt{\lambda}})$,\\
        where $E_{n\lambda}^{\perp}=\left\{\left.u\in C^{\infty}\left(\mathbb{S}^n\left(\frac{1}{\sqrt{\lambda}}\right)\right)\ \right|\int_{\mathbb{S}^n\left(\frac{1}{\sqrt{\lambda}}\right)}uvdv_{\bg}=0,\ \forall v\in E_{n\lambda}\right\}$ 
    \end{itemize}
    And
    \begin{equation*}
        S_2(M)=\{\mathcal{L}_{\bg}(X)|X\in\mathfrak{X}(M)\}\oplus T_{\bg}\mathcal{S}_{\bg}.
    \end{equation*}
    
\end{theor}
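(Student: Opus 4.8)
The plan is to realise $\mathcal S_{\bar g}$ as a transverse slice to the orbit of $\bar g$ under the pullback action of the diffeomorphism group $\mathcal D(M)$, and to upgrade infinitesimal transversality to a genuine local product structure near $\bar g$ by combining elliptic theory with the inverse function theorem in Sobolev completions. The geometric starting point is the Berger--Ebin decomposition of symmetric $2$-tensors. Writing $\mathscr L_X\bar g = 2\delta^{\ast}(X^{\flat})$, where $\delta^{\ast}$ is the formal adjoint of the divergence $\delta$ on $S_2(M)$, the composition $\delta\delta^{\ast}$ is a second-order elliptic operator on $1$-forms (a Bochner Laplacian plus a Ricci term). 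Hodge and elliptic theory then yield the $L^2$-orthogonal splitting
\[
  S_2(M) = \operatorname{Im}\delta^{\ast} \oplus \ker\delta
         = \{\mathscr L_X\bar g : X\in\Omega^1(M)\} \oplus \ker\delta,
\]
which is exactly the claimed direct sum with $T_{\bar g}\mathcal S_{\bar g} = \ker\delta$.

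Next I would identify $\ker\delta$ with the stated model tangent space. Each divergence-free $h$ splits as $h=\mathring h+\tfrac{\operatorname{tr}h}{n}\bar g$; on an Einstein background the transverse-traceless part lies in $S_{2,\bar g}^{TT}$ and the pure-trace part is parametrised by $C^\infty(M)\cdot\bar g$, giving $T_{\bar g}\mathcal S_{\bar g}=S_{2,\bar g}^{TT}\oplus(C^\infty(M)\cdot\bar g)$. The exceptional case is the round sphere: there the gradient fields of first eigenfunctions are conformal Killing fields, so the conformal directions $E_{n\lambda}\cdot\bar g$ already lie inside the orbit tangent space $\operatorname{Im}\delta^{\ast}$ and must be removed, producing the correction $E_{n\lambda}^{\perp}\cdot\bar g$. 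Verifying this reduces to computing the kernel of the relevant operator on $\mathbb S^n(1/\sqrt\lambda)$, where the Lichnerowicz--Obata equality case pins down precisely the eigenspace $E_{n\lambda}$.

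To pass from the infinitesimal to the local statement I would consider the map $\Phi(\varphi,h)=\varphi^{\ast}(\bar g+h)$ defined on $\mathcal D(M)\times(\text{small elements of }T_{\bar g}\mathcal S_{\bar g})$ with values in $\mathcal M$. Its differential at $(\mathrm{id},0)$ sends $(X,h)\mapsto \mathscr L_X\bar g+h$ and is surjective onto $S_2(M)$ by Step~1, with kernel the Killing fields; quotienting the domain by the isometry group turns it into an isomorphism onto a complement of the orbit. Fixing $p>n$ and passing to $W^{2,p}$ completions makes $\mathcal M$ a Banach manifold and the action sufficiently regular, so the Banach inverse function theorem shows $\Phi$ is a local diffeomorphism; this furnishes, for every $g$ with $\|g-\bar g\|_{W^{2,p}(M,\bar g)}<\epsilon$, a diffeomorphism $\varphi$ with $\varphi^{\ast}g\in\mathcal S_{\bar g}$.

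The main obstacle is analytic rather than geometric: the diffeomorphism group is not a Banach--Lie group in a naive fixed topology, and composition and pullback lose derivatives, so the inverse function theorem cannot be applied off the shelf. The standard resolution, due to Ebin, is to work in the Sobolev category with $p>n$ (hence the $W^{2,p}$ hypothesis), where $\mathcal D^{s+1}$ acts smoothly enough on $\mathcal M^{s}$ and the regularity loss can be tracked and controlled. Keeping the function spaces mutually compatible, so that ellipticity delivers the orthogonal splitting of Step~1 while the inverse function theorem simultaneously applies in Step~3, is the delicate heart of the argument.
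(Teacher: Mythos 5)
This theorem is not proved in the paper at all: it is quoted from the literature (the paper cites Brendle--Marques and Viaclovsky), so your sketch can only be measured against the standard argument. Your overall architecture --- Berger--Ebin splitting, special treatment of the round sphere via conformal Killing fields, and Ebin's inverse-function-theorem argument in $W^{2,p}$ completions --- is the right strategy, and Steps 1 and 3 are sound in outline.

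However, your Step 2 contains a genuine error. The Berger--Ebin decomposition gives the $L^2$-orthogonal splitting $S_2(M)=\{\mathscr{L}_X\bar g\,|\,X\in\Omega^1(M)\}\oplus\ker\delta_{\bar g}$, but $\ker\delta_{\bar g}$ is \emph{not} equal to $S_{2,\bar g}^{TT}\oplus(C^{\infty}(M)\cdot\bar g)$, which is the slice tangent space asserted in the statement. If $h\in\ker\delta_{\bar g}$ and you write $h=\mathring h+\frac{\operatorname{tr}h}{n}\bar g$, then $\delta\mathring h=\pm\frac{1}{n}\,d(\operatorname{tr}h)\neq 0$ unless $\operatorname{tr}h$ is constant, so the trace-free part of a divergence-free tensor is generally \emph{not} transverse-traceless; conversely, $u\bar g$ is divergence-free only for constant $u$. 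The two subspaces $\ker\delta_{\bar g}$ and $S_{2,\bar g}^{TT}\oplus(C^{\infty}(M)\cdot\bar g)$ are genuinely different complements of the orbit directions; indeed the latter is not even $L^2$-orthogonal to $\operatorname{Im}\delta^{*}$, since $\langle\delta^{*}\omega,u\bar g\rangle_{L^2}=\pm\langle\omega,du\rangle_{L^2}$. To reach the statement as written you need the conformal (York) decomposition rather than a trace split inside $\ker\delta$: decompose the trace-free part of an arbitrary $h$ as $h^{TT}$ plus the image of the \emph{conformal} Killing operator $X\mapsto\mathscr{L}_X\bar g-\frac{2\,\mathrm{div}X}{n}\bar g$, and then reabsorb the conformal correction into the pure-trace term, giving $h=h^{TT}+\mathscr{L}_X\bar g+u\bar g$ with $u=\frac{1}{n}(\operatorname{tr}h-2\,\mathrm{div}X)$. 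Directness of this (non-orthogonal) sum then rests on the fact that on a closed Einstein manifold other than the round sphere every conformal Killing field is Killing; on $\mathbb{S}^n(1/\sqrt{\lambda})$ the failure of this fact is precisely the overlap $\mathscr{L}_{\nabla u}\bar g=2\nabla^2u=-2\lambda u\bar g$ for $u\in E_{n\lambda}$ (Obata's equation), which is what forces the replacement of $C^{\infty}(M)$ by $E_{n\lambda}^{\perp}$. With that repair, your inverse-function-theorem step goes through as you describe.
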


Applying the slice theorem, we can restrict $\mathcal{F}_{{\bg}}$ on a local slice $\mathcal{S}_{\bg}$ and denote it by $\mathcal{F}_{{\bg}}^{\mathcal{S}}$. To investigate the behavior of $\mathcal{F}_{{\bg}}^{\mathcal{S}}$, we state the following Morse lemma \cite{Fischer1975}:

\begin{lemma}
    (Morse lemma). Let $\mathcal{P}$ be a Banach manifold and $F:\mathcal{P}\rightarrow\mathbb{R}$ a $C^2$-function. Suppose $\mathcal{Q}\subset\mathcal{P}$ is a submanifold satisfying
    \begin{equation*}
        F=0\quad\mathrm{and}\quad dF=0\quad\mathrm{on}\ \mathcal{Q}
    \end{equation*}
    and that there is a smooth normal bundle neighborhood of $\mathcal{Q}$ such that if $\mathcal{E}_x$ is the normal complement to $T_x\mathcal{Q}$ in $T_x\mathcal{P}$ then $d^2F(x)$ is weakly negative definite on $\mathcal{E}_x$, that is 
    \begin{equation*}
        d^2F(x)(v,v)\leq0
    \end{equation*}
    with equality only if $v=0$. Let $\langle,\rangle_x$ be a weak Riemannian structure with a smooth connection and assume that $F$ has a smooth $\langle,\rangle_x$-gradient, $Y(x)$. Assume $DY(x)$ maps $\mathcal{E}_x$ to $\mathcal{E}_x$ and is an isomorphism for $x\in\mathcal{Q}$. Then there is a neighborhood $U$ of $\mathcal{Q}$ such that for any $y\in U$,
    \begin{equation*}
        F(y)\geq0
    \end{equation*}
    implies $y\in\mathcal{Q}$.
\end{lemma}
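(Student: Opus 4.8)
The plan is to prove this by the Morse--Palais normal-form method adapted to the Banach setting, reducing the global statement to a fiberwise analysis over $\mathcal{Q}$. First I would use the smooth connection underlying the weak Riemannian structure $\langle,\rangle_x$ to build a tubular neighborhood of $\mathcal{Q}$: the associated exponential map restricted to the normal bundle $\mathcal{E}=\bigsqcup_{x\in\mathcal{Q}}\mathcal{E}_x$ gives a smooth diffeomorphism $\Phi$ from a neighborhood of the zero section onto a neighborhood of $\mathcal{Q}$ in $\mathcal{P}$. Setting $\tilde F=F\circ\Phi$, it suffices to show that $\tilde F(x,v)<0$ whenever $x\in\mathcal{Q}$ and $v\in\mathcal{E}_x$ is small and nonzero, since then $F(y)\ge 0$ can hold near $\mathcal{Q}$ only on the zero section, i.e. only on $\mathcal{Q}$ itself. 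Because $F=0$ and $dF=0$ on $\mathcal{Q}$, the fiber derivatives of $\tilde F$ vanish on the zero section, so the fiberwise Taylor expansion with integral remainder reads $\tilde F(x,v)=\int_0^1(1-t)\,d^2_v\tilde F(x,tv)(v,v)\,dt$, where $d^2_v$ denotes the second fiber derivative and the integrand at $t=0$ equals exactly $d^2F(x)(v,v)$.

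The heart of the argument is to convert the pointwise weak negative-definiteness of $d^2F(x)$ into a uniform coercive estimate, and this is where the gradient hypothesis enters. Since $Y$ is the $\langle,\rangle_x$-gradient of $F$ and $dF=0$ on $\mathcal{Q}$, we have $Y=0$ along $\mathcal{Q}$, and the derivative $DY(x)$ is the Hessian operator of $F$ relative to the weak metric, characterized by $\langle DY(x)v,w\rangle_x=d^2F(x)(v,w)$ for $v,w\in\mathcal{E}_x$. The assumption that $DY(x)$ maps $\mathcal{E}_x$ to $\mathcal{E}_x$ and is an isomorphism means this Hessian operator is invertible; together with $d^2F(x)(v,v)\le 0$ (equality only at $v=0$) it follows that $DY(x)$ is a negative isomorphism of $\mathcal{E}_x$. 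Invertibility combined with sign-definiteness then yields a bound $d^2F(x)(v,v)\le -c_x\|v\|_x^2$ with $c_x>0$ for each $x\in\mathcal{Q}$, and because $\Phi$, $Y$ and $DY$ are smooth the constant can be taken locally uniform in $x$. By continuity of $d^2_v\tilde F$ the same negative bound $d^2_v\tilde F(x,tv)(v,v)\le -\tfrac{c}{2}\|v\|_x^2$ persists for all $(x,tv)$ in a suitable neighborhood of the zero section.

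Feeding this into the Taylor remainder gives $\tilde F(x,v)\le -\tfrac{c}{4}\|v\|_x^2$ for all small $v\in\mathcal{E}_x$, using $\int_0^1(1-t)\,dt=\tfrac12$, and this is strictly negative whenever $v\neq 0$. Transporting back through $\Phi$, on a neighborhood $U=\Phi(\{\|v\|_x<\rho\})$ of $\mathcal{Q}$ we get $F<0$ off $\mathcal{Q}$ and $F=0$ on $\mathcal{Q}$, so $F(y)\ge 0$ with $y\in U$ forces the normal component $v$ to vanish, i.e. $y\in\mathcal{Q}$, which is the assertion. Equivalently, one may run the Moser path method: with $Q=\tfrac12 d^2F$ and $F_t=Q+t(F-Q)$, solve $\dot\phi_t=X_t\circ\phi_t$ for the time-dependent fiber vector field $X_t$ determined by $dF_t(X_t)=-(F-Q)$, which is solvable precisely because the $DY$-isomorphism makes $dF_t$ invertible in the fiber directions near $\mathcal{Q}$; the resulting fiber-preserving diffeomorphism straightens $\tilde F$ to its negative-definite quadratic part and the sign conclusion is then immediate.

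The step I expect to be the main obstacle is the second one. In an infinite-dimensional Banach setting the weak negative-definiteness of the bilinear form $d^2F(x)$ does not by itself furnish a coercive lower bound, and it is exactly the role of the $DY$-isomorphism assumption, read through the weak Riemannian structure, to upgrade pointwise definiteness to a quantitative negativity estimate. Care is required to check that the coercivity constant is uniform over a whole neighborhood of $\mathcal{Q}$ rather than merely pointwise, that the tubular neighborhood and the fiber Hessian vary smoothly enough to transfer the estimate from the zero section to nearby fibers, and that all norms are consistently measured in the weak metric in which $Y$ is the gradient; the smoothness of the connection and of $Y$ that the statement assumes are precisely what legitimize these passages.
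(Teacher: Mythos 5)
The paper itself gives no proof of this lemma: it is quoted verbatim from Fischer--Marsden \cite{Fischer1975} and used as a black box, so your attempt must stand on its own, and it does not. The preliminary reductions are fine (the tubular-neighborhood chart, the reduction to showing $\tilde F(x,v)<0$ for small $v\neq 0$, the facts that $Y=0$ along $\mathcal{Q}$ and that $\langle DY(x)v,w\rangle_x=d^2F(x)(v,w)$ at points of $\mathcal{Q}$). The gap is exactly where you place your ``heart of the argument.'' First, the asserted upgrade ``$DY(x)$ invertible on $\mathcal{E}_x$ plus $d^2F(x)(v,v)<0$ for $v\neq0$ implies $d^2F(x)(v,v)\leq -c_x\|v\|_x^2$'' is stated in one sentence with no proof, and it is not a standard fact. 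The spectral argument that converts ``invertible and negative'' into ``numerical range bounded away from zero'' requires a bounded self-adjoint operator on a Hilbert space whose inner product is the one appearing in the quadratic form. Here $DY(x)$ is an isomorphism of the Banach space $\mathcal{E}_x$, whose topology is strictly finer than the one induced by the weak metric $\langle,\rangle_x$; $DY(x)$ is only formally symmetric with respect to $\langle,\rangle_x$, need not extend boundedly to the Hilbert completion, and its $\mathcal{E}_x$-spectrum avoiding $0$ does not prevent its weak numerical range from accumulating at $0$. So the coercivity estimate is unjustified.

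Second, and decisively, even if fiberwise weak-norm coercivity held at points of $\mathcal{Q}$, your persistence step fails because of the two-norm problem: $F$ is $C^2$ on the Banach manifold, so continuity of $d_v^2\tilde F$ controls the variation of the Hessian only in the Banach operator norm, i.e., it yields $\lvert d_v^2\tilde F(x,tv)(v,v)-d^2F(x)(v,v)\rvert\leq\epsilon\,\|v\|_{\mathcal{E}_x}^2$, and the sum $-c\|v\|_x^2+\epsilon\|v\|_{\mathcal{E}_x}^2$ has no sign, since the weak norm does not control the Banach norm. Thus the claim that the bound $d_v^2\tilde F(x,tv)(v,v)\leq-\tfrac{c}{2}\|v\|_x^2$ ``persists for all $(x,tv)$ in a suitable neighborhood'' is precisely the step that is false in general; this is the well-known reason the naive Taylor/Morse--Palais argument breaks for weak Riemannian structures, and it is why Fischer--Marsden phrase the hypotheses through the gradient $Y$ and the isomorphism $DY$ in the first place --- their proof does not dominate a Taylor remainder but exploits the invertibility of $DY$ in a normal-form/deformation argument. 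Your parenthetical Moser-path alternative is in fact the right direction, but as written it is a sketch that omits the actual content: solvability of $dF_t(X_t)=-(F-Q)$ with estimates uniform near the zero section, again with the two-norm bookkeeping, and the regularity issue that $F$ is merely $C^2$, so $F-Q$ is only $o(\|v\|^2)$ with no derivative control along the deformation. Without supplying these, the proposal does not constitute a proof.
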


Applying it to \Cref{Proposition: critical_pts,Proposition: Negativity}, we obtain the following rigidity result:

\begin{proposition}\label{Proposition: Regidity}
    Suppose $(M^n,\bg)$ is a strictly stable Einstein manifold with $\lambda\neq0$ and denote the sectional curvature of $M$ by $K$. Let $\mathcal{S}_{\bg}$ be a local slice through $\bg$. Then there is a neighborhood $U_{\bg}\subset\mathcal{S}_{\bg}$ of $\bg$, such that for any $g\in U_{\bg}$,
    \begin{equation*}
        g=c^2\bg
    \end{equation*}
    with a constant $c>0$ if any of the following is satisfied:
        \begin{itemize}
        \item $\lambda>0$, $l\in\big[0,\frac{n}{2}\big)\cup\big(\frac{n}{2}+k,n\big]$, and
    \begin{equation*}
    \mathcal{F}_{{\bg}}^{\mathcal{S}}(g)\geq\mathcal{F}_{{\bg}}^{\mathcal{S}}(\bg);
    \end{equation*}
    \item $\lambda<0$, $K_{\bg} < \frac{n(n-2)}{2\left(n(k-1)-2l(k-l)\right)}\lambda$, $l\in\big(\frac{n}{2},\frac{n}{2}+k\big)$ and
    \begin{enumerate}
        \item $nk$ is even and
        \begin{equation*}
        \mathcal{F}_{{\bg}}^{\mathcal{S}}(g)\geq\mathcal{F}_{{\bg}}^{\mathcal{S}}(\bg);
        \end{equation*}
    \item $nk$ is odd and
        \begin{equation*}
            \mathcal{F}_{{\bg}}^{\mathcal{S}}(g)\leq\mathcal{F}_{{\bg}}^{\mathcal{S}}(\bg).
        \end{equation*}
    \end{enumerate}
    \end{itemize}
\end{proposition}
\begin{proof}
    Take $\mathcal{P}=\mathcal{S}_{\bg}$ and
    \begin{equation*}
        F=\mathcal{F}_{\bg}^{\mathcal{S}}-\mathcal{F}_{\bg}^{\mathcal{S}}(\bg):\mathcal{P}\rightarrow\mathbb{R}.
    \end{equation*}
    Since $F$ is dilation-invariant, according to Proposition 1, 
    \begin{equation*}
        F=0\quad\mathrm{and}\quad dF=0
    \end{equation*}
    on
    \begin{equation*}
        \mathcal{Q}=\{c^2\bg\in\mathcal{S}_{\bg}|c>0\}.
    \end{equation*}
    Koiso proved in \cite{Koiso1980} that strict stability allows us to take $\mathcal{Q}$ to be the subset of $\mathcal{S}_{\bg}$ consisted of Einstein metrics near $\bg$ without loss of generality.

    The tangent space of $\mathcal{Q}$ at $\bg$ is 
    \begin{equation*}
        T_{\bg}\mathcal{Q}=\mathbb{R}\bg
    \end{equation*}
    and its $L^2$-complement in $T_{\bg}\mathcal{P}$ is given as follow:
    \begin{itemize}
        \item when $\bg$ is not isometric to the round sphere: $$\mathcal{E}_{\bg}=S_{2,\bg}^{TT}(M)\oplus \left\{u\bg \left|u\in C^{\infty}(M)\ \ \mathrm{satisfying}\ \int_M udv_{\bg}=0 \right. \right\};$$ 
        \item when $\bg$ is isometric to $\mathbb{S}^n(\frac{1}{\sqrt{\lambda}})$: $$\mathcal{E}_{\bg}=S_{2,\bg}^{TT}(M)\oplus \left\{u\bg \left|u\in E_{n\lambda}^{\perp}\ \ \mathrm{satisfying}\ \int_M udv_{\bg}=0  \right. \right \}.$$ 
    \end{itemize}

    Define a weak Riemannian structure on $\mathcal{P}$: for any $g\in\mathcal{P}$,
    \begin{equation*}
        \langle\langle h,k\rangle\rangle_g:=\int_M\left(\langle\nabla_gh,\nabla_gk\rangle_g+\langle h,k\rangle_g\right)dv_g=\int_M\langle(-\Delta_g+1)h,k\rangle_gdv_g
    \end{equation*}
    for any $h,k\in T_g\mathcal{P}$. According to \cite{Ebin1970}, it has a smooth connection and the $\langle\langle,\rangle\rangle_g$-gradient of $F$ is 
    \begin{equation*}
    \begin{split}
        Y_g=P_g(-\Delta_g+1)^{-1}\Bigg\{F(g)\bigg[&2k\left(\int_M\sigma_l^gdv_g\right)^{-1}\left(\Gamma_{l,g}^*(1)+\frac{1}{2}\sigma_l^gg\right)\\
        &+(n-2l)\left(\int_M\sigma_k^gdv_{\bg}\right)^{-1}\Gamma_{k,g}^*(f_g)\bigg]\Bigg\}
    \end{split}
    \end{equation*}
    where $P_g$ is the orthogonal projection to $T_g\mathcal{P}$, $\Gamma_{k,g}^*$ is the $L^2$-adjoint of $D\sigma_k(g)$ and $f_g$ is defined by $dv_{\bg}=f_gdv_g$. It is obvious that $Y_g$ is smooth. 
    
    For any $g\in\mathcal{Q}$, the linearization of $Y$ at $\bg$ is 
    \begin{equation*}
    \begin{split}
        DY_{\bg}\cdot h=P_{\bg}(-\Delta_{\bg}+1)^{-1}\left(D^2F\cdot(h,\cdot)\right)
    \end{split}
    \end{equation*}
    for any $h\in\mathcal{E}_{\bg}$. When 
    $$\lambda>0 \quad \mathrm{and} \quad l\in\big[0,\frac{n}{2}\big)\cup\big(\frac{n}{2}+k,n\big]$$ or $$K_{\bg} < \frac{n(n-2)}{2\left(n(k-1)-2l(k-l)\right)}\lambda < 0,\quad l\in\big(\frac{n}{2},\frac{n}{2}+k\big)\quad \mathrm{and}\quad nk\ \mathrm{is\ even,}$$
    since $D^2F$ is strictly negative definite on $\mathcal{E}_{\bg}$, we can see that $DY_{\bg}$ is an isomorphism.

    By Morse Lemma, there exists a neighborhood $U_1$ of $\mathcal{Q}$ such that for any $g\in U_1$, if $\mathcal{F}_{\bg}^{\mathcal{S}}(g)\geq\mathcal{F}_{\bg}^{\mathcal{S}}(\bg)$, then $g\in\mathcal{Q}$, that is
    \begin{equation*}
        g=c^2\bg
    \end{equation*}
    for a constant $c>0$.

    When 
    $$K_{\bg} < \frac{n(n-2)}{2\left(n(k-1)-2l(k-l)\right)}\lambda < 0,\quad l\in\big(\frac{n}{2},\frac{n}{2}+k\big)\quad \mathrm{and}\quad nk\ \mathrm{is\ odd,}$$ 
    take $F=\mathcal{F}_{\bg}^{\mathcal{S}}(\bg)-\mathcal{F}_{\bg}^{\mathcal{S}}$, and in the same way, there exists a neighborhood $U_2$ of $\mathcal{Q}$ such that for any $g\in U_2$, if $\mathcal{F}_{\bg}^{\mathcal{S}}(g)\leq\mathcal{F}_{\bg}^{\mathcal{S}}(\bg)$, then 
    \begin{equation*}
        g=c^2\bg
    \end{equation*}
    for a constant $c>0$.

    By taking $U_{\bg}=U_1\cap U_2$, we finish the proof.
\end{proof}

Now we are ready to prove the main theorem:
\begin{proof}[Proof of Main Theorem]
    According to the slice theorem, we can find a local slice $\mathcal{S}_{\bg}$ through $\bg$. Since $||g - \bg||_{C^2(M,\bg)} < \epsilon$, there is a diffeomorphism $\varphi$ such that $\varphi^*g \in U_{\bg}$ given in \Cref{Proposition: Regidity}. To make it looks concise, we also denote it by $g$.

    Before providing the proofs in each cases, we claim that for all $k$, $\sigma_k^g$ shares the identical sign of $\sigma_k^{\bg}$. This comes form the fact that the Schouten curvature of $g$ is close to the one of $\bg$, owing to the assumption: $||g - \bg||_{C^2(M,\bg)} < \epsilon$. In details, according to the \Cref{Lemma: variation_of_Ricci,Lemma: variation_of_scalar},
    \begin{equation*}
        |R_g-R_{\bg}|+|Ric_{ij}^g-Ric_{ij}^{\bg}|\leq C(|h|+|\nabla_{\bg}h|+|\nabla^2h|)\ \ \ \forall\ i,j
    \end{equation*}
    where, $|h|=\max_{i,j}|h_{ij}|$, $|\nabla_{\bg}h|=\max_{i,j,k}|\nabla^{\bg}_kh_{ij}|$ and $|\nabla^2h|=\max_{i,j,k,l}|\nabla^{\bg}_l\nabla^{\bg}_kh_{ij}|$. By the definition of Schouten curvature,
    \begin{equation*}
        |S_{ij}^g-S_{ij}^{\bg}|\leq C(|h|+|\nabla_{\bg}h|+|\nabla^2h|)\ \ \ \forall\ i,j.
    \end{equation*}
    Using \Cref{eq: sigmak}, we have
    \begin{equation*}
        |\sigma_k^g-\sigma_k^{\bg}|\leq C(|h|+|\nabla_{\bg}h|+|\nabla^2h|)\ \ \ \forall\ i,j.
    \end{equation*}
    Since $||g - \bg||_{C^2(M,\bg)} < \epsilon$, we can take $\epsilon$ small enough to ensure
    \begin{equation*}
        |\sigma_k^g-\sigma_k^{\bg}|\leq \frac{1}{2}|\sigma_k^{\bg}|\ \ \ \forall\ i,j.
    \end{equation*}

    \ \\

    When $\lambda>0$, for all $k$, $\sigma_k^{\bg},\ \sigma_k^g>0$. If one of the following assumption holds:
    \begin{equation*}
        \mathrm{(a)}\ \ \sigma_k^g \geq \sigma_k^{\bg}, \ \ l< \frac{n}{2}\qquad\mathrm{or}\qquad\mathrm{(b)}\ \  \sigma_k^g \leq \sigma_k^{\bg}, \ \ l > \frac{n}{2} + k.
    \end{equation*}
    by assuming reversely
    \begin{equation*}
        \int_M\sigma_l^gdv_g\geq\int_M\sigma_l^{\bg}dv_{\bg},
    \end{equation*}
    we have
    \begin{align*}
        \mathcal{F}_{\bg}^{\mathcal{S}}(g)=&\left(\int_M\sigma_l^gdv_g\right)^{2k}\left(\int_M\sigma_k^gdv_{\bg}\right)^{n-2l} \geq \left( \int_M\sigma_l^{\bg}dv_{\bg} \right)^{2k} \left(\int_M\sigma_k^{\bg}dv_{\bg}\right)^{n-2l}=\mathcal{F}_{\bg}^{\mathcal{S}}(\bg).
    \end{align*}
    Therefore, by \Cref{Proposition: Regidity},
    \begin{equation*}
        g=c^2\bg
    \end{equation*}
    for constant $c>0$. Now
    \begin{align*}
        \int_M\sigma_l^gdv_g &= c^{n-2l}\int_M\sigma_l^{\bg}dv_{\bg} \geq \int_M\sigma_l^{\bg}dv_{\bg}>0,
    \end{align*}
    which implies 
    \begin{equation*}
        \mathrm{(a)}\ \ c \geq 1 \ \ \mathrm{for} \ \ l< \frac{n}{2}\qquad\mathrm{and}\qquad\mathrm{(b)}\ \ c \leq 1 \ \ \mathrm{for} \ \ l > \frac{n}{2} + k.
    \end{equation*}
    On the other hand,
    \begin{align*}
        \sigma_k^g &= c^{-2k}\sigma_k^{\bg}\begin{cases}
            \geq\sigma_k^{\bg}\ \ \mathrm{for}\ \ l<\frac{n}{2}\\
            \leq\sigma_k^{\bg}\ \ \mathrm{for}\ \ l>\frac{n}{2}+k
        \end{cases}
    \end{align*}
    which implies
    \begin{equation*}
        \mathrm{(a)}\ \ c \leq 1 \ \ \mathrm{for} \ \ l< \frac{n}{2}\qquad\mathrm{and}\qquad\mathrm{(b)}\ \ c \geq 1 \ \ \mathrm{for} \ \ l > \frac{n}{2} + k.
    \end{equation*}
    Therefore, in both cases, we have $c=1$. That is, $g=\bg$.

    \ \\

    When $\lambda<0$ and $l\in\left(\frac{n}{2},\frac{n}{2}+k\right)$, if one of the following assumption holds:
    \begin{equation*}
        \mathrm{(a)}\ \ \sigma_k^g \geq \sigma_k^{\bg}, \ \ k \ \mathrm{is\  odd}\qquad\mathrm{or}\qquad\mathrm{(b)}\ \  \sigma_k^g \leq \sigma_k^{\bg}, \ \ k \ \mathrm{is\  even}
    \end{equation*}

    Notice that 
    \begin{equation*}
        \mathrm{(a)}\ \ \sigma_k^g,\ \sigma_k^{\bg}<0 \ \ \mathrm{for}\ \ k \ \ \mathrm{is\  odd}\qquad\mathrm{and}\qquad\mathrm{(b)}\ \ \sigma_k^g,\ \sigma_k^{\bg}>0 \ \ \mathrm{for}\ \ k \ \ \mathrm{is\  even}
    \end{equation*}
    It's equivalent to say $|\sigma_k^g|\leq|\sigma_k^{\bg}|$.

    \ \\

    For even $l$, $\sigma_l^{\bg},\ \sigma_l^g>0$. We assume reversely
    \begin{equation*}
        \int_M\sigma_l^gdv_g\geq\int_M\sigma_l^{\bg}dv_{\bg},
    \end{equation*}
    then     
    \begin{align*}
        \mathcal{F}_{\bg}^{\mathcal{S}}(g)&=\left(\int_M\sigma_l^gdv_g\right)^{2k}\left(\int_M\sigma_k^gdv_{\bg}\right)^{n-2l}
        =(-1)^{nk}\left(\int_M\sigma_l^gdv_g\right)^{2k}\left(\int_M|\sigma_k^g|dv_{\bg}\right)^{n-2l}.
    \end{align*}
    Since $\frac{n}{2}-l<0$, we have 
    \begin{equation*}
        \left(\int_M\sigma_l^gdv_g\right)^{2k}\left(\int_M|\sigma_k^g|dv_{\bg}\right)^{n-2l}\geq\left(\int_M\sigma_l^{\bg}dv_g\right)^{2k}\left(\int_M|\sigma_k^{\bg}|dv_{\bg}\right)^{n-2l}.
    \end{equation*}
    Therefore,
    \begin{align*}
    \mathcal{F}_{\bg}^{\mathcal{S}}(g)
        &\begin{cases}
            \geq\mathcal{F}_{\bg}^{\mathcal{S}}(\bg)\ \  \mathrm{for\ even}\ nk \\
            \leq\mathcal{F}_{\bg}^{\mathcal{S}}(\bg)\ \  \mathrm{for\ odd}\ nk. \\
        \end{cases}
    \end{align*}
    Therefore, by \Cref{Proposition: Regidity},
    \begin{equation*}
        g=c^2\bg
    \end{equation*}
    for constant $c>0$. Now
    \begin{align*}
        \int_M\sigma_l^gdv_g &= c^{n-2l}\int_M\sigma_l^{\bg}dv_{\bg} \geq \int_M\sigma_l^{\bg}dv_{\bg}>0,
    \end{align*}
    which implies $c\leq1$. On the other hand,
    \begin{align*}
        0<|\sigma_k^g| &= c^{-2k}|\sigma_k^{\bg}|\leq|\sigma_k^{\bg}|
    \end{align*}
    which implies $c\geq1$. Therefore, in both cases, we have $c=1$. That is, $g=\bg$.

    \ \\

    For odd $l$, $\sigma_l^{\bg},\ \sigma_l^g<0$. We also assume reversely
    \begin{equation*}
        \int_M\sigma_l^gdv_g\leq\int_M\sigma_l^{\bg}dv_{\bg},
    \end{equation*}
    then    
    \begin{equation*}
        \int_M|\sigma_l^g|dv_g\geq\int_M|\sigma_l^{\bg}|dv_{\bg}.
    \end{equation*}
    Now
    \begin{align*}
        \mathcal{F}_{\bg}^{\mathcal{S}}(g)&=\left(\int_M\sigma_l^gdv_g\right)^{2k}\left(\int_M\sigma_k^gdv_{\bg}\right)^{n-2l} =(-1)^{nk}\left(\int_M|\sigma_l^g|dv_g\right)^{2k}\left(\int_M|\sigma_k^g|dv_{\bg}\right)^{n-2l}.
    \end{align*}
    Similarly, we have 
    \begin{equation*}
        \left(\int_M|\sigma_l^g|dv_g\right)^{2k}\left(\int_M|\sigma_k^g|dv_{\bg}\right)^{n-2l}\geq\left(\int_M|\sigma_l^{\bg}|dv_g\right)^{2k}\left(\int_M|\sigma_k^{\bg}|dv_{\bg}\right)^{n-2l}.
    \end{equation*}
    Therefore,
    \begin{align*}
    \mathcal{F}_{\bg}^{\mathcal{S}}(g)
        &\begin{cases}
            \geq\mathcal{F}_{\bg}^{\mathcal{S}}(\bg)\ \  \mathrm{for\ even}\ nk \\
            \leq\mathcal{F}_{\bg}^{\mathcal{S}}(\bg)\ \  \mathrm{for\ odd}\ nk. \\
        \end{cases}
    \end{align*}
    Therefore, by \Cref{Proposition: Regidity},
    \begin{equation*}
        g=c^2\bg
    \end{equation*}
    for constant $c>0$. Now
    \begin{align*}
        \int_M|\sigma_l^g|dv_g &= c^{n-2l}\int_M|\sigma_l^{\bg}|dv_{\bg} \geq \int_M|\sigma_l^{\bg}|dv_{\bg}>0,
    \end{align*}
    which implies $c\leq1$. Similarly, the relation between $\sigma_k^g$ and $\sigma_k^{\bg}$ implies $c\geq1$. Therefore, in both cases, we have $c=1$. That is, $g=\bg$.

    \ \\

    Now, we focus on the negative Einstein manifold with $k=l=1$. According to the discussion in \Cref{Subsection: 4.1}, we have
    \begin{enumerate}
        \item $n$ is odd, then
        \begin{equation*}
        D^2\mathcal{F}_{\bg}\cdot(h,h) \leq 0;
        \end{equation*}
    \item $n$ is even, then
        \begin{equation*}
            D^2\mathcal{F}_{\bg}\cdot(h,h) \geq 0.
        \end{equation*}
    \end{enumerate}
    Similar to \Cref{Proposition: Regidity}, we have $g=c^2\bg$ provided
    \begin{enumerate}
        \item $n$ is odd and
        \begin{equation*}
        \mathcal{F}_{{\bg}}^{\mathcal{S}}(g)\geq\mathcal{F}_{{\bg}}^{\mathcal{S}}(\bg);
        \end{equation*}
    \item $n$ is even and
        \begin{equation*}
            \mathcal{F}_{{\bg}}^{\mathcal{S}}(g)\leq\mathcal{F}_{{\bg}}^{\mathcal{S}}(\bg).
        \end{equation*}
    \end{enumerate}
    If $R_g\geq R_{\bg}$, since $R_g,\ R_{\bg}<0$, we have $|R_g|\leq |R_{\bg}|$. By assuming reversely
    \begin{equation*}
        \int_MR_gdv_g\geq\int_MR_{\bg}dv_{\bg},
    \end{equation*}
    that is $\int_M|R_g|dv_g\leq\int_M|R_{\bg}|dv_{\bg}$, we have
    \begin{align*}
        \mathcal{F}_{\bg}^{\mathcal{S}}(g)=\left(\int_MR_gdv_g\right)^{2}\left(\int_MR_gdv_{\bg}\right)^{n-2}=(-1)^n\left(\int_M|R_g|dv_g\right)^{2}\left(\int_M|R_g|dv_{\bg}\right)^{n-2}
    \end{align*}
    Therefore,
    \begin{equation*}
        \mathcal{F}_{\bg}^{\mathcal{S}}(g)
        \begin{cases}
            \geq\mathcal{F}_{\bg}^{\mathcal{S}}(\bg)\ \  \mathrm{for\ odd}\ n \\
            \leq\mathcal{F}_{\bg}^{\mathcal{S}}(\bg)\ \  \mathrm{for\ even}\ n.
        \end{cases}
    \end{equation*}
    Hence $g=c^2\bg$ for constant $c>0$. Now
    \begin{align*}
        0<\int_M|R_g|dv_g &= c^{n-2}\int_M|R_{\bg}|dv_{\bg} \leq\int_M|R_{\bg}|dv_{\bg},
    \end{align*}
    which implies $c\leq1$. On the other hand,
    \begin{align*}
        0<|R_g|=c^{-2}|R_{\bg}|\leq |R_{\bg}|
    \end{align*}
    which implies $c\geq1$. Therefore, we have $c=1$. That is, $g=\bg$.
    
\end{proof}

\begin{remark}
    The stability condition is essential. Otherwise, we can construct a counterexample for the case of positive curvature.
\end{remark}

According to \cite{Kroncke}, a prototypical example of an unstable Einstein manifold is the product of two Einstein manifolds with identical positive Einstein constant. Suppose \( n \) is even, and let \( (N^{\frac{n}{2}}_i, g_i) \) for \( i = 1, 2 \) be two Einstein manifolds satisfying
\begin{align*}
    \mathrm{Ric}_{g_i} = \lambda g_i \quad (\lambda > 0).
\end{align*}
Then the product manifold
\begin{align*}
    (M = N_1 \times N_2,\ \bg = g_1 + g_2)
\end{align*}
is an unstable Einstein manifold with
\begin{align*}
    \mathrm{Ric}_{\bg} = \lambda \bg.
\end{align*}

Define a one-parameter family of metrics by
\begin{align*}
g_t = \frac{1}{1+t} g_1 + \frac{1}{1 - t + \alpha t^2} g_2.
\end{align*}
After calculation (see Appendix A), we obtain
\begin{align*}
\sigma_k^{g_t} = &\left( \frac{n-2}{2(n-1)} \lambda \right)^k \binom{n}{k} \left\{ 1 + \frac{k}{2}\phi(k;\alpha)t^2 \right\}+ o(t^2),\\
\int_M \sigma_l^{g_t}\, dv_{g_t}=&\left( 1 + \frac{n-2l}{4}\psi(k,l;\alpha) t^2 \right)\int_M \sigma_l^{\bg}\, dv_{\bg}+ o(t^2),
\end{align*}
where $\phi(k;\alpha)=\alpha-\frac{4(n-1)(k-1)}{(n-2)^2}$ and $\psi(k,l;\alpha)=\frac{n}{n-2l}-\frac{8l(l-1)(n-1)}{(n-2l)(n-2)^2}-\alpha$. 

\ \\

According to Appendix A, when
\begin{align*}
    l\in[1,\frac{2(n-1)+\sqrt{2(n-1)\left(2(n-1)+n(n-2)^2\right)}}{4(n-1)})\subset[1,\frac{n}{2})
\end{align*}
and
\begin{align*}
    k\in[1,\frac{n(n-2)^2-8l(l-1)(n-1)}{4(n-1)(n-2l)}+1)\subset[1,n],
\end{align*}
there exists $\alpha$ such that $\phi(k;\alpha)>0$ and $\psi(k,l;\alpha)>0$, that is 
\begin{align*}
    \sigma_k^{g_t}>\sigma^{\bg}_k\ \ \text{and}\ \ \ \int_M \sigma_l^{g_t}\, dv_{g_t}>\int_M \sigma_l^{\bg}\, dv_{\bg}
\end{align*}
for sufficiently small $|t|$, which gives an counterexample for the case (a) in \Cref{theorem: main_A}.

\begin{remark}
    As for the case (b) in \Cref{theorem: main_A}, the assumption $l>\frac{n}{2}+k$, analogous to a supercritical scenario, poses similar challenges in finding the counterexample. From the perspective of the functional $\mathcal{F}_{\bg}$, the counterexample could be found by taking the perturbed metric $g_t=\bg+th$, where $h\in\mathcal{S}^{TT}_{2,\bg}(M)$ satisfying $\Delta_Eh=\mu h\,(\mu>0)$ and $D^2\mathcal{F}_{\bg}\cdot(h,h)>0$. We have
    \begin{align*}
        D^2\mathcal{F}_{\bg}\cdot(h,h) =a(n,k,l)\lambda^{nk-2}\alpha\mu\left(\mu-\frac{n(n-2)^2}{2\left(n(k-1)-2l(k-l)\right)}\lambda\right)||h||_{L^2(M,\bg)}^2>0
    \end{align*}
    provided $\mu<\frac{n(n-2)^2}{2\left(n(k-1)-2l(k-l)\right)}\lambda$. For unstable product Einstein manifold \[ (M^n=N_1^{n_1}\times N_2^{n_2},\bg=g_1+g_2)\] with $Ric_{\bg}=(n-1)\lambda\bg\, (\lambda>0)$, where $(N_i,g_i)\,(i=1,2)$ are two positive Einstein manifold. We know that $h=n_2g_1-n_1g_2\in\mathcal{S}^{TT}_{2,\bg}(M)$ satisfying $\Delta_Eh=2(n-1)\lambda h$. However, when $l>\frac{n}{2}+k$,
    \begin{align*}
        \frac{n(n-2)^2}{2\left(n(k-1)-2l(k-l)\right)}\leq\frac{n(n-2)^2}{2\left[n(1-1)-2(\frac{n}{2}+1)(1-(\frac{n}{2}+1))\right]}=\frac{(n-2)^2}{n+2}<2(n-1),
    \end{align*}
    which tells that such $h$ is not suitable for the counterexample. Looking for other $h$ or investigating more complicated unstable Einstein manifold, such as $Spin(5)$, is beyond our current capabilities. 
\end{remark}

\begin{remark}
    As for \Cref{theorem: main_B}, the most well known negative Einstein manifold is the hyperbolic space, which is strictly stable. Moreover, the existence of unstable compact negative Einstein manifold is still remain an open problem \cite{Kroncke}. Therefore,the solution to the existence of analogous counterexamples in the setting of \Cref{theorem: main_B} is beyond our current capabilities.
\end{remark}

\ \\

\section*{Appendix A: Calculation of counterexamples}

For 
\begin{align*}
g_t = \frac{1}{1+t} g_1 + \frac{1}{1 - t + \alpha t^2} g_2,
\end{align*}
where $g_i\,(i=1,2)$ is Einstein metric of dimension $\frac{n}{2}\,(n$ is even), satisfying $\mathrm{Ric}_{g_i} = \lambda g_i \quad (\lambda > 0)$. Since $Ric_{g_t}=\lambda \bg$, $\forall t$, we have
\begin{equation*}
    R_{g_t}=\frac{n}{2}\left(2+\alpha t^2\right)\lambda.
\end{equation*}
Therefore, the Schouten curvature of $g_t$ is 
\begin{equation*}
    S_{g_t}=\lambda\left(F_1(t)\frac{g_1}{1+t}+F_2(t)\frac{g_2}{1-t+\alpha t^2}\right),
\end{equation*}
where $F_1(t)=\frac{n-2}{2(n-1)}+t-\frac{n}{4(n-1)}\alpha t^2$ and $F_2(t)=\frac{n-2}{2(n-1)}-t+\frac{3n-4}{4(n-1)}\alpha t^2$. According to the \Cref{eq: sigmak},
\begin{equation*}
    \sigma_k^{g_t}=\lambda^k\sum_{\substack{k_1+k_2=k \\ k_1,\ k_2\leq\frac{n}{2}}}\binom{\frac{n}{2}}{k_1}\binom{\frac{n}{2}}{k_2}F_1^{k_1}(t)F_2^{k_2}(t).
\end{equation*}
Since
\begin{align*}
    F_1^{k_1}=\Bigg(\frac{n-2}{2(n-1)}&\Bigg)^{k_1}\Bigg[1+k_1\left(\frac{n-2}{2(n-1)}\right)^{-1}t\\
    &+\left(\frac{k_1(k_1-1)}{2}\left(\frac{n-2}{2(n-1)}\right)^{-2}-k_1\frac{n}{4(n-1)}\alpha\left(\frac{n-2}{2(n-1)}\right)^{-1}\right)t^2\Bigg]+o(t^2)\\
    =\Bigg(\frac{n-2}{2(n-1)}&\Bigg)^{k_1}\Bigg[1+\frac{2(n-1)}{n-2}k_1t+\left(\frac{2k_1(k_1-1)(n-1)^2}{(n-2)^2}-\frac{n}{2(n-2)}k_1\alpha\right)t^2\Bigg]+o(t^2),\\
    F_2^{k_2}=\Bigg(\frac{n-2}{2(n-1)}&\Bigg)^{k_2}\Bigg[1-k_2\left(\frac{n-2}{2(n-1)}\right)^{-1}t\\
    &+\left(\frac{k_2(k_2-1)}{2}\left(\frac{n-2}{2(n-1)}\right)^{-2}+k_2\frac{3n-4}{4(n-1)}\alpha\left(\frac{n-2}{2(n-1)}\right)^{-1}\right)t^2\Bigg]+o(t^2)\\
    =\Bigg(\frac{n-2}{2(n-1)}&\Bigg)^{k_2}\Bigg[1-\frac{2(n-1)}{n-2}k_2t+\left(\frac{2k_2(k_2-1)(n-1)^2}{(n-2)^2}+\frac{3n-4}{2(n-2)}k_2\alpha\right)t^2\Bigg]+o(t^2),\\
\end{align*}
Then when $k_1+k_2=k$,
\begin{align*}
    F_1^{k_1}F_2^{k_2}=\Bigg(\frac{n-2}{2(n-1)}\Bigg)^{k}\Bigg\{1+&\frac{2(n-1)}{n-2}(k_1-k_2)t\\
    +\Bigg[&-k_1k_2\frac{4(n-1)^2}{(n-2)^2}+\frac{2(n-1)^2}{(n-2)^2}\left(k_1(k_1-1)+k_2(k_2-1)\right)\\
    &+\left((3n-4)k_2-nk_1\right)\frac{\alpha}{2(n-2)}\Bigg]t^2\Bigg\}+o(t^2).
\end{align*}
By the symmetry of $k_1$ and $k_2$, 
\begin{align*}
    \sigma_k^{g_t}=\Bigg(\frac{n-2}{2(n-1)}\lambda\Bigg)^{k}\sum_{\substack{k_1+k_2=k \\ k_1,\ k_2\leq\frac{n}{2}}}\binom{\frac{n}{2}}{k_1}\binom{\frac{n}{2}}{k_2}\Bigg\{1+\Bigg[&-k_1k_2\frac{4(n-1)^2}{(n-2)^2}+\frac{4(n-1)^2}{(n-2)^2}k_1(k_1-1)\\
    &+k_1\alpha\Bigg]t^2\Bigg\}+o(t^2).
\end{align*}
Using the hypergeometric distribution: $X\sim H(n,\frac{n}{2},k)$ with expectation $E(X)=\frac{k}{2}$ and variation $V(X)=\frac{k(n-k)}{4(n-1)}$, 
\begin{itemize}
    \item $\sum_{\substack{k_1+k_2=k \\ k_1,\ k_2\leq\frac{n}{2}}}\binom{\frac{n}{2}}{k_1}\binom{\frac{n}{2}}{k_2}=\binom{n}{k}$;
    \item $\sum_{\substack{k_1+k_2=k \\ k_1,\ k_2\leq\frac{n}{2}}}\binom{\frac{n}{2}}{k_1}\binom{\frac{n}{2}}{k_2}k_1=\binom{n}{k}E(X)$;
    \item $\sum_{\substack{k_1+k_2=k \\ k_1,\ k_2\leq\frac{n}{2}}}\binom{\frac{n}{2}}{k_1}\binom{\frac{n}{2}}{k_2}k_1^2=\binom{n}{k}\left(V(X)+E^2(X)\right)$;
    \item $\sum_{\substack{k_1+k_2=k \\ k_1,\ k_2\leq\frac{n}{2}}}\binom{\frac{n}{2}}{k_1}\binom{\frac{n}{2}}{k_2}k_1k_2=\binom{n}{k}\left(kE(X)-V(X)-E^2(X)\right)$,
\end{itemize}
combining together, we obtain
\begin{align*}
\sigma_k^{g_t} = &\left( \frac{n-2}{2(n-1)} \lambda \right)^k \binom{n}{k} \left\{ 1 + \frac{k}{2}\phi(k;\alpha)t^2 \right\}+ o(t^2)
\end{align*}
where $\phi(k;\alpha)=\alpha-\frac{4(n-1)(k-1)}{(n-2)^2}$. On the other hand, 
\begin{align*}
\int_M \sigma_l^{g_t}\, dv_{g_t^1} =&\left((1+t)\left(1-t+\alpha t^2\right)\right)^{-\frac{n}{4}}\left(1+\frac{l}{2}\phi(l;\alpha)t^2\right)\int_M \sigma_l^{\bg}\, dv_{\bg}+ o(t^2)\\
=&\left(1-\frac{n}{4}(\alpha-1)t^2\right)\left(1+\frac{l}{2}\phi(l;\alpha)t^2\right)\int_M \sigma_l^{\bg}\, dv_{\bg}+ o(t^2)\\
=&\left( 1 + \frac{n-2l}{4}\psi(k,l;\alpha) t^2 \right)\int_M \sigma_l^{\bg}\, dv_{\bg}+ o(t^2),
\end{align*}
where $\psi(k,l;\alpha)=\frac{n}{n-2l}-\frac{8l(l-1)(n-1)}{(n-2l)(n-2)^2}-\alpha$. 

\ \\

Noting
\begin{align*}
    \phi(k;\alpha)=&\alpha-\frac{4(n-1)(k-1)}{(n-2)^2},\\
    \psi(k,l;\alpha)=&\frac{n}{n-2l}-\frac{8l(l-1)(n-1)}{(n-2l)(n-2)^2}-\alpha.
\end{align*}
Denote $\xi(l)=n(n-2)^2-8l(l-1)(n-1)$, then the positive zero of $\xi$ is 
\begin{align*}
    l_0=\frac{2(n-1)+\sqrt{4(n-1)^2+2n(n-1)(n-2)^2}}{4(n-1)}\in(1,\frac{n}{2})
\end{align*}
and $\xi(l)$ decreases in $[1,n]$.

There exists a $\alpha$ such that $\phi(k;\alpha)>0$ and $\psi(k,l;\alpha)>0$ provided
\begin{align*}
    \frac{4(n-1)(k-1)}{(n-2)^2}<\frac{n}{n-2l}-\frac{8l(l-1)(n-1)}{(n-2l)(n-2)^2},
\end{align*}
that is
\begin{align*}
    k<\frac{\xi(l)}{4(n-1)(n-2l)}+1.
\end{align*}
Requiring $k\in[1,n]$ results in
\begin{align*}
    \xi(l)>0,\quad \text{that is}\ \ l<l_0.
\end{align*}
Since 
\begin{align*}
    \xi(l)-4(n-1)^2(n-2l)\leq\xi(\frac{n}{2})-4(n-1)^2(n-n)=\xi(\frac{n}{2})<0,
\end{align*}
we have $\frac{\xi(l)}{4(n-1)(n-2l)}+1<n$. Combining together, when
\begin{align*}
    l\in[1,l_0)\subset[1,\frac{n}{2})\quad\text{and}\quad k\in[1,\frac{\xi(l)}{4(n-1)(n-2l)}+1)\subset[1,n],
\end{align*}
there exists a $\alpha$ such that $\phi(k;\alpha)>0$ and $\psi(k,l;\alpha)>0$.

\begin{bibdiv}
\begin{biblist}

\bib{sigma2}{article}{
   author={Andrade, Maria},
   author={Cruz, Tiarlos},
   author={Silva Santos, Almir},
   title={On the $\sigma_2$-curvature and volume of compact manifolds},
   journal={Ann. Mat. Pura Appl. (4)},
   volume={202},
   date={2023},
   number={1},
   pages={367--395},
   issn={0373-3114},
   review={\MR{4531725}},
}

\bib{Besse2008}{book}{
   author={Besse, Arthur L.},
   title={Einstein manifolds},
   series={Classics in Mathematics},
   note={Reprint of the 1987 edition},
   publisher={Springer-Verlag, Berlin},
   date={2008},
   pages={xii+516},
   isbn={978-3-540-74120-6},
   review={\MR{2371700}},
}

\bib{Brayphd}{article}{
  title={{The Penrose inequality in general relativity and volume comparison theorems involving scalar curvature}},
  author={Hubert Bray},
  journal={PhD thesis, Stanford University},
  year={1997},
}

\bib{Besson-C-G1}{article}{
   author={Besson, G.},
   author={Courtois, G.},
   author={Gallot, S.},
   title={Volume et entropie minimale des espaces localement sym\'etriques},
   language={French},
   journal={Invent. Math.},
   volume={103},
   date={1991},
   number={2},
   pages={417--445},
   issn={0020-9910},
   review={\MR{1085114}},
}

\bib{Besson-C-G2}{article}{
   author={Besson, G.},
   author={Courtois, G.},
   author={Gallot, S.},
   title={Entropies et rigidit\'es des espaces localement sym\'etriques de
   courbure strictement n\'egative},
   language={French},
   journal={Geom. Funct. Anal.},
   volume={5},
   date={1995},
   number={5},
   pages={731--799},
   issn={1016-443X},
   review={\MR{1354289}},
}

\bib{BrendleMaruqes}{article}{
   author={Brendle, Simon},
   author={Marques, Fernando C.},
   title={Scalar curvature rigidity of geodesic balls in $S^n$},
   journal={J. Diff. Geom.},
   volume={88},
   date={2011},
   number={3},
   pages={379--394},
   issn={0022-040X},
   review={\MR{2844438}},
}

\bib{2025Volume}{article}{
   author={Chen, Jiaqi},
   author={Fang, Yi},
   author={He, Yan},
   author={Zhong, Jingyang},
   title={Volume comparison theorem with respect to sigma-$k$ curvature},
   journal={Calc. Var. Partial Different Equations},
   volume={64},
   date={2025},
   number={3},
   pages={Paper No. 95, 1-20},
   issn={0944-2669},
   review={\MR{4871907}},
}

\bib{Chern}{article}{
   author={Chern, Shiing-shen},
   title={On curvature and characteristic classes of a Riemann manifold},
   journal={Abh. Math. Sem. Univ. Hamburg},
   volume={20},
   date={1955},
   pages={117--126},
   issn={0025-5858},
   review={\MR{0075647}},
}

\bib{Miao}{article}{
   author={Corvino, Justin},
   author={Eichmair, Michael},
   author={Miao, Pengzi},
   title={Deformation of scalar curvature and volume},
   journal={Math. Ann.},
   volume={357},
   date={2013},
   number={2},
   pages={551--584},
   issn={0025-5831},
   review={\MR{3096517}},
}

\bib{Ebin1970}{article}{
   author={Ebin, David G.},
   title={The manifold of Riemannian metrics},
   conference={
      title={Global Analysis},
      address={Proc. Sympos. Pure Math., Vols. XIV, XV, XVI, Berkeley,
      Calif.},
      date={1968},
   },
   book={
      series={Proc. Sympos. Pure Math.},
      volume={XIV-XVI},
      publisher={Amer. Math. Soc., Providence, RI},
   },
   date={1970},
   pages={11--40},
   review={\MR{0267604}},
}

\bib{Fischer1975}{article}{
   author={Fischer, Arthur E.},
   author={Marsden, Jerrold E.},
   title={Deformations of the scalar curvature},
   journal={Duke Math. J.},
   volume={42},
   date={1975},
   number={3},
   pages={519--547},
   issn={0012-7094},
   review={\MR{0380907}},
}

\bib{Fujitani1979COMPACTSP}{article}{
  title={{Compact suitably pinched Einstein manifolds}},
  author={Tamehiro Fujitani},
  journal={Bull. Faculty Liberal Arts, Nagasaki University},
  volume={19},
  pages = {1-5},
  year={1979}
}

\bib{Hamilton}{article}{
   author={Hamilton, Richard S.},
   title={Non-singular solutions of the Ricci flow on three-manifolds},
   journal={Comm. Anal. Geom.},
   volume={7},
   date={1999},
   number={4},
   pages={695--729},
   issn={1019-8385},
   review={\MR{1714939}},
}

\bib{Koiso1980}{article}{
   author={Koiso, Norihito},
   title={Rigidity and stability of Einstein metrics---the case of compact
   symmetric spaces},
   journal={Osaka Math. J.},
   volume={17},
   date={1980},
   number={1},
   pages={51--73},
   issn={0388-0699},
   review={\MR{0558319}},
}

\bib{Kroncke}{article}{
author = {Klaus Kr{\"o}ncke},
year = {2013},
title = {{Stability of Einstein manifolds}},
journal = {PhD thesis, University of Potsdam}
}

\bib{PeterLi}{book}{
  place={Cambridge}, 
  series={Cambridge Studies in Advanced Mathematics}, 
  title={Geometric Analysis}, 
  publisher={Cambridge University Press}, 
  author={Li, Peter}, 
  year={2012}, 
}

\bib{Yuan_Q2}{article}{
   author={Lin, Yueh-Ju},
   author={Yuan, Wei},
   title={Deformations of $Q$-curvature II},
   journal={Calc. Var. Partial Differential Equations},
   volume={61},
   date={2022},
   number={2},
   pages={Paper No. 74, 1-28},
   issn={0944-2669},
   review={\MR{4380034}},
}

\bib{Perelman1}{article}{
author = {Grisha Perelman},
year = {2002},
title = {{The entropy formula for the Ricci flow and its geometric applications}},
journal={preprint, arXiv:math.DG/0211159}
}

\bib{Perelman2}{article}{
author = {Grisha Perelman},
year = {2003},
title = {{Ricci flow with surgery on three-manifolds}},
journal={preprint, arXiv:math.DG/0303109}
}

\bib{Robert1}{article}{
   author={Reilly, Robert C.},
   title={On the Hessian of a function and the curvatures of its graph},
   journal={Michigan Math. J.},
   volume={20},
   date={1973},
   pages={373--383},
   issn={0026-2285},
   review={\MR{0334045}},
}

\bib{Schoen}{article}{
   author={Schoen, Richard M.},
   title={Variational theory for the total scalar curvature functional for
   Riemannian metrics and related topics},
   conference={
      title={Topics in calculus of variations},
      address={Montecatini Terme},
      date={1987},
   },
   book={
      series={Lecture Notes in Math.},
      volume={1365},
      publisher={Springer, Berlin},
   },
   isbn={3-540-50727-2},
   date={1989},
   pages={120--154},
   review={\MR{0994021}},
}

\bib{Viaclovsky2016}{article}{
   author={Viaclovsky, Jeff A.},
   title={Critical metrics for Riemannian curvature functionals},
   conference={
      title={Geometric analysis},
   },
   book={
      series={IAS/Park City Math. Ser.},
      volume={22},
      publisher={Amer. Math. Soc., Providence, RI},
   },
   isbn={978-1-4704-2313-1},
   date={2016},
   pages={197--274},
   review={\MR{3524218}},
}

\bib{Yuan_ph_D}{book}{
   author={Yuan, Wei},
   title={The geometry of vacuum static spaces and deformations of scalar
   curvature},
   note={Thesis (Ph.D.)--University of California, Santa Cruz},
   publisher={ProQuest LLC, Ann Arbor, MI},
   date={2015},
   pages={145},
   isbn={978-1321-93381-9},
   review={\MR{3419216}},
}

\bib{Yuan_VolumeCW}{article}{
   author={Yuan, Wei},
   title={Volume comparison with respect to scalar curvature},
   journal={Anal. PDE},
   volume={16},
   date={2023},
   number={1},
   pages={1--34},
   issn={2157-5045},
   review={\MR{4578524}},

}

\end{biblist}
\end{bibdiv}

\end{document}